\tikzset{frontline/.style={preaction={draw=white,-,line width=6pt}},}  
\newcommand{\ig}[2]{\vcenter{\xy (0,0)*{\includegraphics[scale=#1]{fig/#2}} \endxy}}
\definecolor{references}{rgb}{0,0,1}
\newtheorem{thm}{Theorem}[section]
\newtheorem{lemma}[thm]{Lemma}
\newtheorem{theorem}[thm]{Theorem}
\newtheorem{observation}[thm]{Observation}
\newtheorem{proposition}[thm]{Proposition}
\newtheorem{corollary}[thm]{Corollary}
\newtheorem*{prop*}{Proposition}
\newtheorem*{lemma*}{Lemma}
\theoremstyle{definition}
\newtheorem{definition}[thm]{Definition}
\newtheorem{notation}[thm]{Notation}
\newtheorem{example}[thm]{Example}
\newtheorem{problem}[thm]{Problem}
\theoremstyle{remark}
\newtheorem{remark}[thm]{Remark}
\numberwithin{equation}{section}
\def\CB{{\mathbf C}}  
\def\EB{{\mathbf E}}
\def\KB{{\mathbf K}}    \def\KC{{\mathcal{K}}}
\def\PB{{\mathbf P}}
\def\AS{{\EuScript A}}
\def\BS{{\EuScript B}}
\def\CS{{\EuScript C}}
\def\NS{{\EuScript N}}
\def\a{\alpha}
\def\b{\beta}
\def\d{\delta}
\let\phi=\varphi
\def\C{{\mathbbm C}}
\def\N{{\mathbbm N}}
\def\R{{\mathbbm R}}
\def\Z{{\mathbbm Z}}
\def\1{\mathbbm{1}}
\newcommand{\one}{\1}
\newcommand{\HH}{\operatorname{HH}}
\newcommand{\smMatrix}[1]{\left[\begin{smallmatrix}#1\end{smallmatrix}\right]}
\def\gbimod{{\text{-gbimod}}}
\newcommand{\Hom}{\operatorname{Hom}}
\newcommand{\End}{\operatorname{End}}
\newcommand{\Ext}{\operatorname{Ext}}
\newcommand{\inv}{^{-1}}
\newcommand{\Ch}{\operatorname{Ch}}
\newcommand{\Cone}{\operatorname{Cone}}
\newcommand{\SBim}{\mathbb{S}\operatorname{Bim}}
\newcommand{\FT}{\operatorname{FT}}
\newcommand{\Br}{\operatorname{Br}}
\newcommand{\Sym}{\operatorname{Sym}}
\newcommand{\poly}{\mathbf{p}}
\newcommand{\knotpoly}{\mathbf{g}}
\newcommand{\linkpoly}{\mathbf{f}}
\newcommand{\uHom}{\underline{\Hom}}
\newcommand{\uEnd}{\underline{\End}}
\newcommand{\tw}{\operatorname{tw}}
\newcommand{\KR}{\operatorname{KR}}
\newcommand{\HY}{\operatorname{HY}}
\newcommand{\unk}{?}
\newcommand{\ring}{\Z}
\newcommand{\invs}{\operatorname{inv}}
\renewcommand{\BS}{\mathbb{BS}}
\author{Matthew Hogancamp}
\address{Northeastern University}
\email{m.hogancamp@northeastern.edu}
\author{Anton Mellit}\address{University of Vienna}\email{anton.mellit@univie.ac.at}
\title{Torus link homology}
\begin{document}

\begin{abstract}
We compute the triply graded Khovanov-Rozansky homology of a family of links, including positive torus links and $\Sym^l$-colored torus knots.
\end{abstract}

\maketitle

\setcounter{tocdepth}{1}
\tableofcontents

\section{Introduction}
\label{s:intro}


In this paper we compute the triply graded Khovanov-Rozansky homology of the torus link $T(m,n)$ with $m,n\in \Z_{\geq 0}$.  These homologies have been the subject of numerous conjectures over the past decade \cite{ORS12,GorskyCatalan,GORS12,GorNeg15}.

Recent years have seen a rapid development of technology which has proven very useful in the study of KR homology, particularly KR homology of torus links.  First, in \cite{HogSym-GT}, the first named author constructed a complex of Soergel bimodules $\KB_n$ (a categorical analogue of a renormalized Young symmetrizer) which facilitates the computation of the ``stable limit'' of KR homologies of $T(m,n)$ as $m\to\infty$ (there is a second stable limit, studied in \cite{AbHog17} using different techniques).  In \cite{ElHog16a} the first author and Ben Elias showed how the same complexes $\KB_n$ can be used to compute KR homologies of many links, with the flagship example being $T(n,n)$ for arbitrary $n\geq 0$. In \cite{Hog17b} the first author applied the same technique to compute KR homologies of $T(n,nk)$ and $T(n,nk\pm 1)$ for $n,k\geq 0$.  Finally in \cite{MellitTorus-pp}, the second named author computed $T(m,n)$ for $m,n\geq 0$ coprime, again using the technique from \cite{ElHog16a}.  The ensuing recursions exactly parallel the recursions appearing in the earlier work of the second author and Erik Carlsson on the Shuffle Theorem \cite{CarMel-jams,MellitRational-pp}.



In this paper we reinterpret and generalize the main idea in \cite{MellitRational-pp} to compute the homology of $T(m,n)$ without the restriction that $m,n$ be coprime (but retaining the restriction that $m,n$ be positive), generalizing both \cite{Hog17b} and \cite{MellitRational-pp}.

We also compute the homology of $T(m,n)$ in which one of the link components is $\Sym^l$-colored (with all other components carrying the standard color). In particular, we obtain the $\Sym^l$-colored triply graded homology of torus knots.

\subsection{Main results}
\label{ss:intro main results}
If $L\subset \R^3$ is an oriented link then we let $H_{\KR}(L)$ denote the triply graded Khovanov-Rozansky homology of $L$ (see \S \ref{ss:shifts} and \S \ref{sss:normalization} for conventions concerning normalization and gradings), with integer coefficients.    The main result of this paper is a recursive computation of $H_{\KR}(T(m,n))$ with $m,n\geq 0$.  The intermediate steps in the recursion are indexed by certain pairs of binary sequences $v,w$.

\begin{definition}\label{introdef:the polys}
Let $v\in \{0,1\}^{m+l}$ and $w \{0,1\}^{n+l}$ be binary sequences with $|v|=|w|=l$.  Here $|v|=v_1+\cdots+v_{m+l}$ is the number of ones.   Let $\poly(v,w)\in \N[q,t^{\pm 1},a,(1-q)\inv]$ denote the unique family of polynomials, indexed by such pairs of binary sequences, satisfying
\begin{enumerate}\setlength{\itemsep}{3pt}
\item $\poly(\emptyset,0^n) = \left(\frac{1+a}{1-q}\right)^n$ and $\poly(0^m,\emptyset) = \left(\frac{1+a}{1-q}\right)^m$.
\item $\poly(v1,w1)=(t^l+a)\poly(v,w)$, where $|v|=|w|=l$.
\item $\poly(v0,w1)=\poly(v,1w)$.
\item $\poly(v1,w0)=\poly(1v,w)$.
\item $\poly(v0,w0)=t^{-l}\poly(1v,1w)+q t^{-l}\poly(0v,0w)$, where $|v|=|w|=l$.
\end{enumerate}
\end{definition}

It is not hard to see that the $\poly(v,w)$ are well-defined (see \S \ref{sss:statement}).   

\begin{theorem}\label{thm:intromain}
If $m,n\geq 0$ the triply graded KR homology of $T(m,n)$ is free over $\ring$ of graded rank $\poly(0^m,0^n)=\frac{1}{1-q}\poly(10^{m-1},10^{n-1})$.
\end{theorem}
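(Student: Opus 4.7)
The plan is to categorify Definition~\ref{introdef:the polys}: construct, for each admissible pair $(v,w)$, a complex $C(v,w)$ of Soergel bimodules whose triply graded Khovanov--Rozansky homology is free over $\ring$ of graded rank $\poly(v,w)$, in such a way that $C(0^m,0^n)$ represents the torus link $T(m,n)$. Theorem~\ref{thm:intromain} then follows by induction on the defining recursion.

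The complexes $C(v,w)$ would be built from the categorified Young symmetrizer $\KB_n$ of \cite{HogSym-GT} together with Rickard braid complexes, generalizing the ``row complexes'' of \cite{MellitRational-pp}. In the intended picture, each $1$ in $v$ (respectively $w$) marks a strand that has been absorbed into a projector, while each $0$ marks an ordinary strand; the common number $l=|v|=|w|$ counts the pairs of strands currently being processed. The base cases $(v,w)=(\emptyset,0^n)$ and $(0^m,\emptyset)$ would correspond to trivial $n$- or $m$-strand braids, whose KR homology is a tensor power of $\HH(\ring[x])$ and therefore has graded rank $\left(\frac{1+a}{1-q}\right)^n$ or $\left(\frac{1+a}{1-q}\right)^m$.

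To prove the theorem, I would then verify that $C(v,w)$ satisfies, up to homotopy and appropriate grading shifts, categorical analogues of the four relations (2)--(5). Relation~(2) should reflect an ``absorption'' identity for $\KB_n$: pulling an extra pair of strands into the projector contributes a free polynomial variable together with an exterior generator in Hochschild degree, producing the scalar factor $t^l+a$. Relations~(3) and (4) should be sliding identities, expressing that active (``$1$'') and passive (``$0$'') strands at the outside of the diagram can be commuted at the cost of relabelling. Relation~(5) is the central one and should arise from a cofiber sequence
\[
t^{-l} C(1v,1w) \longrightarrow C(v0,w0) \longrightarrow q t^{-l} C(0v,0w),
\]
coming from a two-step filtration of a projector complex on the active strands.

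The principal obstacle is establishing relation~(5) in the presence of spectator strands. In the coprime case of \cite{MellitRational-pp}, the analogous cofiber sequence comes from an exact triangle for $\KB_n$ itself, but the argument is essentially carried out on the ``active'' block alone. In the present setting one must verify that the passive $0$-strands genuinely act as spectators: that the relevant homotopy equivalence persists after tensoring with Rickard complexes for the inactive strands, and that no new higher differentials are introduced. Once this is in hand, termination of the recursion and the identification of $C(0^m,0^n)$ with a standard presentation of $T(m,n)$ are comparatively routine, and Theorem~\ref{thm:intromain} follows by induction on, for instance, the total number of zeros in $(v,w)$.
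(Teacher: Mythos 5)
Your overall strategy is the one the paper follows: it defines complexes $\CB(v,w)$ from shuffle braids and the projector $\KB_l$, proves categorical analogues of relations (2)--(5) (Lemmas \ref{lemma:Dv1w1}--\ref{lemma:Dv0w0}), and inducts. But as written the proposal has a gap at the decisive step. A cofiber sequence as in your relation (5) only yields a twisted complex with a possibly nonzero connecting map; to conclude that the graded rank is $t^{-l}\poly(1v,1w)+qt^{-l}\poly(0v,0w)$ and that homology is free over $\ring$ you must show the connecting differential vanishes. The paper gets this from parity: by induction both terms are free with gradings involving only integer powers of $q=Q^2$ and $t=T^2Q^{-2}$, hence concentrated in even homological degrees, so the degree-one map between them is zero and the complex splits. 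Without this degeneration argument the induction does not close. Incidentally, the obstacle you single out (spectator strands in relation (5)) is not where the difficulty lies: \eqref{eq:Krecursion} is a local homotopy equivalence and is simply tensored into the larger diagram in Lemma \ref{lemma:Dv0w0}; and your heuristic for relation (2) (a polynomial variable plus an exterior generator) would produce $\frac{1+a}{1-q}$, whereas the correct factor $t^l+a$ comes from the partial-trace property \eqref{eq:Kmarkov} of $\KB_{l+1}$.

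More seriously, your induction scheme and base case do not work as stated. Induction on the total number of zeros fails because relation (5) sends $(v0,w0)$ to $(0v,0w)$, which has the same length and the same number of zeros; the paper instead orders pairs by length, then number of ones, then inversions. Even with that ordering, the all-zeros case is genuinely self-referential: for $(0^m,0^n)$ the second term in relation (5) is $(0^m,0^n)$ again, and the recursion only determines $\poly(0^m,0^n)=\frac{1}{1-q}\poly(10^{m-1},10^{n-1})$ after formally solving for it. Categorically this is not a ``routine identification of $C(0^m,0^n)$ with $T(m,n)$'': the paper handles it as a separate case, tensoring $\CB(0^m,0^n)$ with $\KB_1$ and invoking (the proof of) Proposition 4.12 of \cite{ElHog16a} --- applicable because $\HH(\CB(10^{m-1},10^{n-1}))$ is even by induction --- to obtain $\HH(\CB(0^m,0^n))\simeq \HH(\CB(10^{m-1},10^{n-1}))\otimes_{\ring}\ring[x]$, which is precisely the $\frac{1}{1-q}$ appearing in the statement you are proving. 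Without an argument of this kind your recursion never actually reaches the torus link itself.
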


\begin{example}
The first example which is not computed in either of the papers \cite{ElHog16a,Hog17b,MellitTorus-pp} is $T(4,6)$.  Theorem \ref{thm:intromain} says that $H_{\KR}(T(4,6))$ is free over $\Z$, of graded rank
\begin{eqnarray*}
\operatorname{gdim}(H_{\KR}(T(4,6))) & = & \poly(0000,000000)\\
&=& \frac{t^{-8}(1+a)}{(1-q)^2}
\bigg(- q^8 t - q^7 t^2 - q^6 t^3 - q^5 t^4 - q^4 t^5 - q^3 t^6 - q^2 t^7 - q t^8\\
&&+ q^8 + q^7 t + q t^7 + t^8\\
& &  + q^6 t - q^4 t^3 - q^3 t^4 + q t^6\\
&& + q^5 t + 2 q^4 t^2 + 2 q^3 t^3 + 2 q^2 t^4 + q t^5) \\
&+& 
a(-q^7 t - q^6 t^2 - q^5 t^3 - q^4 t^4 - q^3 t^5 - q^2 t^6 - q t^7\\
&& + q^7 - q^5 t^2 - q^4 t^3 - q^3 t^4 - q^2 t^5 + t^7\\
&& + q^6 + q^5 t - q^4 t^2 - 2 q^3 t^3 - q^2 t^4 + q t^5 + t^6\\
&& + q^5 +3 q^4 t + 3 q^3 t^2 + 3 q^2 t^3 + 3 q t^4 + t^5\\
&&  + q^3 t + q^2 t^2 + q t^3) \\
&+&
a^2(-q^5 t - q^4 t^2 - q^3 t^3 - q^2 t^4 - q t^5\\
&&+q^5 - q^3 t^2 - q^2 t^3 + t^5\\
&&+ q^4 + q^3 t + q t^3 + t^4\\
&& + q^3 + 2 q^2 t + 2 q t^2 + t^3)\\
&+&
a^3(-q^2t - qt^2 + q^2 + qt + t^2)\bigg)
\end{eqnarray*}
\end{example}

We use formal variables $Q,A,T$ to represent the three gradings on Hochschild cohomology of complexes of Soergel bimodules, and set $q:=Q^2$, $a:=AQ^{-2}$, $t:=T^2Q^{-2}$ (see \S \ref{ss:shifts}).  Then since $t$ involves even powers of $T$ (which represents (co)homological degree) we have the following as a corollary.

\begin{corollary}\label{cor:parity}
The triply graded KR homology of $T(m,n)$ is supported in even homological degrees when $m,n\geq 0$
\end{corollary}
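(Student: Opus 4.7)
The plan is to derive Corollary \ref{cor:parity} directly from Theorem \ref{thm:intromain} together with the definition of the polynomials $\poly(v,w)$. The key observation is purely a grading-bookkeeping one: the homological grading (tracked by $T$) enters the triply graded Poincaré series only through the combination $t = T^2 Q^{-2}$, so any monomial in $q, t^{\pm 1}, a$ automatically has even total degree in $T$.

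First, I would invoke Theorem \ref{thm:intromain} to identify the graded rank of $H_{\KR}(T(m,n))$ with the polynomial $\poly(0^m,0^n)$. By Definition \ref{introdef:the polys}, this polynomial lies in $\N[q,t^{\pm 1},a,(1-q)^{-1}]$; indeed each of the defining relations (1)--(5) preserves membership in this ring (the base case is a power series in $q$ with coefficients in $\N[a]$, while the inductive rules only multiply by $t^{\pm l} + a\cdot t^{\pm l'}$ or by $q$ or permute the arguments). Thus one can write
\[
\poly(0^m,0^n) = \sum_{i,j\in\Z,\,k\in\N} c_{i,j,k}\, q^{i} t^{j} a^{k}, \qquad c_{i,j,k}\in \N,
\]
with only finitely many nonzero coefficients in each $(1-q)$-geometric expansion (or, if one prefers, viewing the sum as a formal power series in $q$).

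Next, I would translate back to the original variables using $q = Q^2$, $a = AQ^{-2}$, $t = T^2 Q^{-2}$. Substituting, each monomial $q^i t^j a^k$ becomes $Q^{2i-2j-2k} A^k T^{2j}$, which contains $T$ only to the even power $2j$. Since $H_{\KR}(T(m,n))$ is free over $\ring$ by Theorem \ref{thm:intromain}, its graded rank records the actual dimensions of its graded pieces (rather than merely the Euler characteristic), so a vanishing of the graded rank in odd $T$-degree forces the corresponding homology groups to vanish. This completes the proof.

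There is no real obstacle here beyond confirming the two ingredients above: that every $\poly(v,w)$ is actually a polynomial in $q, t^{\pm 1}, a$ (with denominators only in $1-q$, which expand into even powers of $T$), and that freeness over $\ring$ lets us read off dimensions from the graded rank. Both will have already been established by the time we reach the corollary.
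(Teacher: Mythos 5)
Your argument is correct and is exactly the paper's: the paper deduces the corollary from Theorem \ref{thm:intromain} by noting that $\poly(0^m,0^n)$ involves only $q=Q^2$, $a=AQ^{-2}$, $t=T^2Q^{-2}$ (and $(1-q)^{-1}$), so the homological variable $T$ appears only in even powers, and freeness over $\ring$ lets the graded rank detect the actual support. Your extra bookkeeping (checking the recursion stays in $\N[q,t^{\pm1},a,(1-q)^{-1}]$ and writing out $q^it^ja^k = Q^{2i-2j-2k}A^kT^{2j}$) just makes the same observation explicit.
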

\begin{remark}
This statement is false for negative torus links, and fails already for $T(2,-4)$.  See \S \ref{sss:negative} for a brief discussion of negative torus links.
\end{remark}

The polynomials $\poly(v,w)$ are in general not the graded dimensions Khovanov-Rozansky homologies of any links (unless $|v|\leq 1$).  Rather, they appear as graded dimensions of some special complexes of Soergel bimodules, which we explain next.

For each integer $n\geq 1$ we let $\BS_n$ denote the category of Bott-Samelson bimodules over $\ring$ (see \S \ref{ss:SBim and Rouquier}).  For each braid $\b\in \Br_n$ we have the \emph{Rouquier complex} $F(\b)$, which is a complex in $\BS_n$, well-defined up to homotopy equivalence.  Hochschild cohomology $\HH$ gives a functor from $\BS_n$ to the category of bigraded $\ring$-modules, so $\HH(F(\b))$ is a complex of bigraded $\ring$-modules (overall such a gadget has three gradings).  The triply graded Khovanov-Rozansky homology of an oriented link $L\subset \R^3$ is isomorphic (up to a shift in the trigrading) to the homology $H(\HH(F(\b))$ where $\b$ is a braid representative of $L$.

If $m,n\in \Z$ are positive, then the torus link $T(m,n)=T(-m,-n)\subset \R^3$ can be described as the closure of the braid depicted below:
\begin{equation}\label{eq:Xmn}
X_{m,n} \ \ := \ \ \begin{minipage}{1.2in}
\labellist
\small
\pinlabel $\underbrace{ \ \ \ \ \ \ \ \  }_m$ at 9 -7
\pinlabel $\underbrace{ \ \ \ \ \ \ \ \ \ \ \   }_n$ at 55 -7
\endlabellist
\begin{center}\includegraphics[scale=.85]{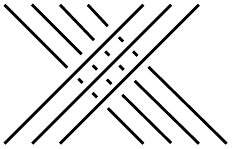}\end{center} 
\end{minipage}
\end{equation}\vskip9pt
\noindent (see \S \ref{sss:Tmn diagram}). The negative torus links $T(m,-n)=T(-m,n)$ can be described similarly, by taking the mirror image (or inverse) of the braid above.

We consider the following special family of complexes of Soergel bimodules, indexed by pair of binary sequences.  Let $v\in \{0,1\}^{m+l}$ and $w\in \{0,1\}^{n+l}$ satisfy $|w|=|v|=l$.  Let $\a_v$ and $\b_w$ be the \emph{shuffle braids} associated to $v,w$ (\S \ref{sss:shuffles}), and consider the complex $\CB(v,w)\in \KC^b(\BS_{n+m+l})$ depicted graphically by
\[
\CB(v,w) \ \ := \ \ \begin{minipage}{.9in}
\labellist
\pinlabel $m$ at 0 -5
\pinlabel $n$ at 30 -5
\pinlabel $l$ at 54 -5
\pinlabel $\a_v$ at  45  76
\pinlabel $\KB_l$ at  57  47
\pinlabel $\b_w$ at 45 23
\endlabellist
\begin{center}\includegraphics[scale=.8]{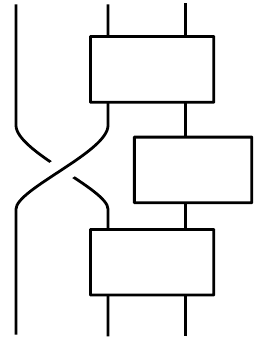}\end{center} 
\end{minipage},
\]
\vskip8pt\noindent
where $\KB_l\in \KC^b(\BS_l)$ is the categorified normalized Young symmetrizer from \cite{HogSym-GT}.  See \S \ref{ss:projectors} for recollections concerning $\KB_l$, and \S \ref{ss:diagrams} for an explanation of the diagrammatic notation.

By construction, $\CB(0^m,0^n)$ is the Rouquier complex associated to the braid $X_{m,n}$, so the homology of $\HH(\CB(0^m,0^n))$ is isomorphic to $H_{\KR}(T(m,n))$ up to a shift.  Theorem \ref{thm:intromain} is a special case of the following.

\begin{theorem}\label{thm:intromain 2}
The complex $\HH(\CB(v,w))$ is homotopy equivalent to the free triply graded $\ring$-module of graded dimension $\poly(v,w)$, with zero differential.
\end{theorem}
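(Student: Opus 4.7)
The plan is to prove Theorem~\ref{thm:intromain 2} by induction on $m+n$, establishing categorical lifts of each of the five recursions in Definition~\ref{introdef:the polys}. Since these recursions uniquely determine $\poly(v,w)$ and each inductive step strictly reduces $m+n$ (or else reaches a base case), it suffices to exhibit, for each rule, an appropriate equivalence---either a homotopy equivalence of complexes of Soergel bimodules, or, where rule (5) produces a mapping cone, a homotopy equivalence of their Hochschild cohomologies with zero differential---that matches the polynomial identity at the level of graded dimensions.

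For the base cases, when $v = \emptyset$ we have $l=0$, so $\KB_0$ is trivial and the shuffle braids $\alpha_\emptyset,\beta_{0^n}$ degenerate. The complex $\CB(\emptyset,0^n)$ reduces diagrammatically to the identity bimodule on $n$ strands, whose Hochschild cohomology is the polynomial ring in $n$ variables tensored with an exterior algebra on $n$ Koszul generators; its graded dimension is $\left(\frac{1+a}{1-q}\right)^n$, matching $\poly(\emptyset,0^n)$. The case $w=\emptyset$ is symmetric.

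For the inductive steps, rules (3) and (4) should be diagrammatic isotopies: a trailing $0$ in $v$ (respectively $w$) corresponds to an outer strand that passes by $\KB_l$ without interacting with it, and in the closed annular picture this strand can be rerouted so that it becomes a leading $1$ of $w$ (respectively $v$), giving a direct identification of complexes. Rule (2) uses the absorption property of $\KB_l$: if $v$ and $w$ both end in $1$, then the corresponding outer strand enters and exits $\KB_l$ at both top and bottom, and by the characteristic projector property of the categorified Young symmetrizer (\S\ref{ss:projectors}) this strand can be stripped off, producing the scalar factor $(t^l+a)$. Here $a$ is the contribution of a free Koszul generator to Hochschild cohomology, while $t^l$ is the $T$-degree contributed by $\KB_l$ acting on one additional strand.

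The main obstacle is rule (5), where both $v$ and $w$ end in $0$. Resolving the last crossing of the cabled torus braid produces a mapping cone whose two terms, after reorganizing the shuffle braids, become copies of $\CB(1v,1w)$ (shifted by $t^{-l}$) and $\CB(0v,0w)$ (shifted by $qt^{-l}$). The crucial step is to show that after applying $\HH$ the connecting differential of this cone is null-homotopic, so the total complex splits as the claimed direct sum with zero differential. This splitting is the technical heart of the proof, analogous to the splittings exploited in \cite{MellitTorus-pp, Hog17b}; the required homotopies would be constructed from the explicit description of $\KB_l$ as a telescope built from $\KB_{l-1}$ (see \cite{HogSym-GT}), with careful tracking of bigraded degrees through $\HH$. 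Granting this splitting, combining all five categorified recursions with the base cases yields inductively that $\HH(\CB(v,w))$ is homotopy equivalent to a free triply graded $\ring$-module of graded dimension $\poly(v,w)$ with zero differential.
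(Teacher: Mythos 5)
Your overall strategy (categorify the five recursions, then induct) is the paper's, but there are two genuine gaps.

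First, the induction scheme is broken. If $m,n$ denote the numbers of zeros, rule (2) leaves $m+n$ unchanged, and so does the second term of rule (5): the pair $(0v,0w)$ has the same length, the same number of ones, and the same number of zeros as $(v0,w0)$. Worse, in the crucial case $v=0^m$, $w=0^n$ (the torus-link case that Theorem \ref{thm:intromain} actually needs), rule (5) is \emph{circular}: the second term is the pair $(0^m,0^n)$ itself, so no induction on $m+n$ (or on total length) can terminate there. The paper fixes this with a finer well-founded order (length, then number of ones, then number of inversions, as in Lemma \ref{lemma:polys are well-defd}) and, for the self-referential pair $(0^m,0^n)$, a separate argument: one relates $\CB(0^m,0^n)$ to $\CB(10^{m-1},10^{n-1})$ by tensoring with $\KB_1$, a Markov move and isotopies, and then invokes (the proof of) Proposition 4.12 of \cite{ElHog16a} to get $\HH(\CB(0^m,0^n))\simeq \HH(\CB(10^{m-1},10^{n-1}))\otimes_{\ring}\ring[x]$, producing the factor $\frac{1}{1-q}$ in $\poly(0^m,0^n)=\frac{1}{1-q}\poly(10^{m-1},10^{n-1})$. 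This step is entirely absent from your proposal, and it is the heart of the torus-link computation.

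Second, your treatment of rule (5) for $l\geq 1$ defers the splitting to homotopies ``constructed from the telescope description of $\KB_l$,'' which is both unproved and not needed. The actual argument is a parity argument: by the inductive hypothesis the two terms of the cone are free modules of graded dimensions $t^{-l}\poly(1v,1w)$ and $qt^{-l}\poly(0v,0w)$, which involve only integer powers of $t=T^2Q^{-2}$ and $q=Q^2$ and hence sit in even homological degrees; a degree-one connecting map between such objects (with zero differentials) must vanish identically, so the cone splits with zero differential. The same evenness statement is what licenses the application of Proposition 4.12 of \cite{ElHog16a} in the $(0^m,0^n)$ case, so establishing it inductively is not an optional refinement but a load-bearing part of the proof. (A minor point: your rules (3) and (4) are not pure isotopies; they use a Markov I move together with absorption of $l$ positive crossings by $\KB_{l+1}$ via \eqref{eq:crossingAbsorption}, so the identification holds after applying $\HH$, not as an equality of complexes.)
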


As a byproduct of our computation, we also obtain the Khovanov-Rozansky homology of a family of non-torus-links.  Precisely, if $v\in\{0,1\}^{m+1}$ and $w\in \{0,1\}^{n+1}$ satisfy $|v|=|w|=1$, then $\CB(v,w)$ is the Rouquier complex of the braid $(\one_n\sqcup \a_v)X_{m,n}(\one_m\sqcup \b_w)$, up to a tensor factor of the form $\one_{n+m}\sqcup \KB_1$. Proposition 4.12 of $\HH(\CB(v,w))$ tells us that
\[
\HH((\one_n\sqcup \a_v)X_{m,n}(\one_m\sqcup \b_w)) \ \simeq \ \HH(\CB(v,w))\otimes_{\ring} \ring[x]
\]
where $x$ is a formal variable of degree $(2,0,0)$ (written multiplicatively $\deg(x)=Q^2=q$).

\begin{corollary}
If $|w|=|v|=1$, the Khovanov-Rozansky homology of the link represented by $(\one_n\sqcup \a_v)X_{m,n}(\one_m\sqcup \b_w)$ is a free triply graded $\ring$-module of graded dimension $\frac{1}{1-q}\poly(v,w)$.\qed
\end{corollary}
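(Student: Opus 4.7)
The plan is to combine Theorem \ref{thm:intromain 2} with the decomposition displayed immediately above the corollary statement. First I would apply Theorem \ref{thm:intromain 2} to the pair $(v,w)$ with $|v|=|w|=1$: this gives that $\HH(\CB(v,w))$ is homotopy equivalent, as a complex of triply graded $\ring$-modules, to the free module of graded dimension $\poly(v,w)$ with zero differential. Tensoring over $\ring$ with the polynomial ring $\ring[x]$, in which $x$ carries tridegree $(2,0,0)$ so that $\ring[x]$ has graded dimension $\frac{1}{1-q}$, preserves both freeness and the vanishing of the differential, and multiplies the graded dimension by $\frac{1}{1-q}$.

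Next I would invoke the displayed identity coming from Proposition 4.12, which identifies $\HH(\CB(v,w))\otimes_{\ring}\ring[x]$ with the Hochschild cohomology of the Rouquier complex of the braid $(\one_n\sqcup \a_v)X_{m,n}(\one_m\sqcup \b_w)$. Since the closure of this braid is the link named in the corollary, and since the triply graded Khovanov--Rozansky homology of a link is by definition the homology of $\HH$ applied to the Rouquier complex of any braid representative (up to an overall shift in the trigrading fixed in \S\ref{ss:shifts} and \S\ref{sss:normalization}), we deduce that $H_{\KR}$ of this link is a free triply graded $\ring$-module of graded dimension $\frac{1}{1-q}\poly(v,w)$.

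There is no substantive homological-algebraic obstacle here: all of the hard work is already contained in Theorem \ref{thm:intromain 2} and in the tensor-factor decomposition furnished by Proposition 4.12. The only point one must verify is that the grading shift relating $H(\HH(F(\gamma)))$ to $H_{\KR}$ of the closure of $\gamma$, for $\gamma$ the braid above, is already absorbed into the normalization built into $\poly(v,w)$, so that no additional power of $Q$, $A$ or $T$ needs to be introduced. This is a routine writhe-bookkeeping check that affects neither freeness nor the stated graded dimension.
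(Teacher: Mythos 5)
Your proposal is correct and is essentially the paper's own argument: the paper treats the corollary as immediate from Theorem \ref{thm:intromain 2} together with the displayed identity $\HH((\one_n\sqcup \a_v)X_{m,n}(\one_m\sqcup \b_w)) \simeq \HH(\CB(v,w))\otimes_{\ring}\ring[x]$, exactly as you do. The only remark worth adding is that the cited Proposition 4.12 (of \cite{ElHog16a}) requires the even-homological-degree support of $\HH(\CB(v,w))$, which is supplied by Theorem \ref{thm:intromain 2} itself since $\poly(v,w)$ involves only integer powers of $t$.
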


We also obtain a result on the colored homology of torus links.  The details of colored homology are technical, so we omit them from this introduction.

\begin{theorem}\label{thm:intromain 3}
Consider the torus link $T(m,n)$ in which one component is labeled with the representation $\Sym^l(V)$, with the remaining components labeled with the standard representation $V$.  Then the triply graded homology of the resulting colored link $(T(m,n),\sigma)$ is a free triply graded $\ring$-module of dimension
\[
\prod_{i=1}^l\frac{1}{1-q t^{1-i}}\poly(1^l 0^{ml -l}, 1^l 0^{nl-l})
\]
\end{theorem}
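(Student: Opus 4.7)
The plan is to reduce Theorem~\ref{thm:intromain 3} to Theorem~\ref{thm:intromain 2} by realizing the $\Sym^l$-colored triply graded homology as $\HH$ of a complex of Soergel bimodules obtained from $\CB(v,w)$ for specific sequences $v$ and $w$, tensored with a polynomial ring whose graded dimension supplies the prefactor.

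First, I would invoke the projector formalism for colored Khovanov--Rozansky homology: coloring a single component of an oriented link by $\Sym^l$ is implemented by replacing that component with an $l$-cable and inserting the categorified Young symmetrizer $\KB_l$ of \cite{HogSym-GT} on the cabled strands.  Applied to $(T(m,n),\sigma)$, which is the closure of $X_{m,n}$, this produces a braid whose Rouquier complex, decorated by $\KB_l$ on the cabled strands, computes the desired colored homology up to an explicit grading shift.

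Next, I would identify this complex, up to tensoring with a polynomial ring, with $\CB(1^l 0^{ml-l},\, 1^l 0^{nl-l})$. The idea is that cabling one component of $T(m,n)$ and sliding the resulting $l$ parallel strands into canonical position---separated from the $l(m-1)$ and $l(n-1)$ remaining strands on either side of the $\KB_l$-block---is exactly what the shuffle braids $\a_v$ and $\b_w$ record for $v=1^l 0^{ml-l}$ and $w=1^l 0^{nl-l}$: the initial $1^l$ prefix encodes that all $l$ ``projector strands'' sit above the others on each side of $\KB_l$, while the $0^{l(m-1)}$ and $0^{l(n-1)}$ tails encode the remaining (uncolored) cable strands of the torus braid $X_{m,n}$. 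Executing this identification uses Markov moves together with the absorption and idempotency properties of $\KB_l$, and serves as a consistency check of the recursion of Definition~\ref{introdef:the polys}: for $l=1$ one has $\Sym^1 V = V$, and the resulting formula $\frac{1}{1-q}\poly(10^{m-1},10^{n-1})$ coincides with the uncolored graded rank given by Theorem~\ref{thm:intromain}.

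Once the identification is made, Theorem~\ref{thm:intromain 2} gives that $\HH(\CB(1^l 0^{ml-l},\, 1^l 0^{nl-l}))$ is free of graded rank $\poly(1^l 0^{ml-l},\, 1^l 0^{nl-l})$, and the polynomial tensor factor contributes the remaining $\prod_{i=1}^l \frac{1}{1-qt^{1-i}}$; its generators have tridegrees $qt^{1-i}$, reflecting the standard polynomial action on the closure of $\KB_l$. The hard part will be making the identification in the second step precise: tracking exactly how the $l$-cabling of a single component interacts with the braid closure, choosing Markov and Reidemeister moves that reduce the cabled braid to the normal form built into the definition of $\CB(v,w)$, and verifying that the tensor factor contributes precisely $\prod_{i=1}^l \frac{1}{1-qt^{1-i}}$ rather than something twisted by an overall grading shift or by stray $(1+at^{1-i})$ factors already absorbed into the $\KB_l$-block.
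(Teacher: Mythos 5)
Your overall strategy---present the colored link by a cabled braid carrying a projector, identify the resulting complex with $\CB(1^l0^{ml-l},1^l0^{nl-l})$ via Markov moves and crossing absorption, and then quote Theorem \ref{thm:intromain 2}---is the reduction the paper intends (its proof of Theorem \ref{thm:colored} is in fact left blank, with the ingredients assembled in \S\ref{s:colored}). However, there is a genuine gap in where your prefactor comes from. In the paper's definition of colored homology the object inserted on the $l$-cabled component is the full categorified symmetrizer $\PB_l$ of \S\ref{ss:cat idempts} (characterized by (P1)--(P2)), not the finite complex $\KB_l$. If colored homology were computed by inserting $\KB_l$, as you state, the answer would be exactly $\poly(1^l0^{ml-l},1^l0^{nl-l})$ with no prefactor; there is no additional ``standard polynomial action on the closure of $\KB_l$'' to harvest. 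The factor $\prod_{i=1}^l(1-qt^{1-i})^{-1}$ is precisely the discrepancy between the two projectors: $\PB_l\simeq\tw_\a(\KB_l\otimes_{\ring}\ring[u_1,\ldots,u_l])$ with $\deg u_i=qt^{1-i}$, and the lemma in \S\ref{ss:cat idempts} says that after applying $\HH$ the twist may be discarded, so that $\HH(\cdots\PB_l\cdots)\simeq\HH(\cdots\KB_l\cdots)\otimes_{\ring}\ring[u_1,\ldots,u_l]$, \emph{only under the hypothesis} that $\HH(\cdots\KB_l\cdots)$ is supported in even homological degrees. That parity input, supplied by Theorem \ref{thm:main thm} (the polynomials $\poly(v,w)$ involve only integer powers of $t=T^2Q^{-2}$), is entirely missing from your outline; without it the twisted differential on $\KB_l\otimes\ring[u_1,\ldots,u_l]$ could contribute and freeness with the stated Poincar\'e series would not follow. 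This is the same ``evenness kills the differential'' mechanism used in Cases 0 and 4 of the proof of Theorem \ref{thm:main thm}.

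Separately, your description of the identification step is not yet right. Only the chosen component of $T(m,n)$ is cabled, so before any manipulation the uncolored strands number $m-m/d$ and $n-n/d$ with $d=\gcd(m,n)$, not $l(m-1)$ and $l(n-1)$; and in the knot case $d=1$, where the counts $l(m-1)$, $l(n-1)$ do appear, those strands are further parallels of the \emph{colored} component rather than ``remaining (uncolored) cable strands.'' Matching the cabled diagram (a block crossing again, since a cable of $X_{m,n}$ is a block crossing, e.g.\ $X_{lm,ln}$ when $d=1$) with $\CB(1^l0^{ml-l},1^l0^{nl-l})$ therefore requires genuine diagrammatic work: isotopies on the torus, absorption of block crossings into $\KB_l$ via \eqref{eq:crossingAbsorption}, and block Markov (de)stabilizations generalizing Case 0 of the proof of Theorem \ref{thm:main thm}. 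You correctly flag this as the hard part, but the strand bookkeeping in your sketch would have to be corrected before that argument can be carried out.
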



\subsection{Open problems}
\label{ss:open probs}
It remains to compare our results with conjectures \cite{ORS12}.  Note that Theorem 5 in \cite{ORS12} calculates the cohomologies of Hilbert schemes relevant to  $T(m,n)$ only when $m,n$ are assumed coprime.

\begin{problem}
Extend the computations in \cite{ORS12} to the non coprime case, and compare with our computation of $H_{\KR}(T(m,n))$.
\end{problem}

In fact the original conjectures of \cite{ORS12} involve not just positive torus links, but arbitrary algebraic links (it is known that these are all iterated cables of torus links).  

\begin{problem}
Compute the triply graded Khovanov-Rozansky homology of algebraic links and compare with conjectures in \cite{ORS12}.
\end{problem}


\begin{remark}
All of the conjectures in \cite{GORS12} are stated with the assumption that $m,n$ are coprime. It would be interesting to generalize these to the link case (or, optimistically, to the case of arbitrary algebraic links) and compare with known computations of $H_{\KR}$.
\end{remark}

Let $L=L_1\cup\cdots\cup L_r$ be an $r$ component link.  Let $x_i,\theta_i$  ($i=1,\ldots,r)$ be formal variables of tridegree $\deg(x_i) = (2,0,0)$ and $\deg(\theta_i)=(2,-1,0)$ (written multiplicatively as $\deg(x_i)=q$, $\deg(\theta_i)=a$; see \S \ref{sss:triply graded cxs}).  We regard $\theta_i$ as odd variables, so notation such as $\ring[\mathbf{x},\boldsymbol{\theta}]$ denotes ``super-polynomial'' ring which is polynomial in the $x_i$ and exterior in the $\theta_i$.  The homology $H_{\KR}(L)$ is a well-defined isomorphism class of triply graded module over $\ring[\mathbf{x},\boldsymbol{\theta}]$.

\begin{problem}\label{prob:HKR as module}
Compute $H_{\KR}(T(m,n))$ as a triply-graded module over $\ring[\mathbf{x},\boldsymbol{\theta}]$.
\end{problem}

\begin{remark}
If $L=L_1=:K$ is a knot then
\[
H_{\KR}(K) \cong H_{\KR}^{\text{red}}(K)\otimes\ring[x_1,\theta_1],
\]
as a triply graded $\ring[x_1,\theta_1]$-module, where $H_{\KR}^{\text{red}}(K)$ denotes the \emph{reduced} homology of $K$.
\end{remark}

There is a more structured link invariant (deformed, or ``$y$-ified'', Khovanov-Rozansky homology) denoted $\HY(L)$ \cite{GorHog17}.  This deformed homology is a module over $\ring[\mathbf{x},\mathbf{y},\boldsymbol{\theta}]$, where $y_1,\ldots,y_r$ are even variables of degree $\deg(y_i)=(-2,0,2)$ (written multiplicatively as $\deg(y_i)=t$).

\begin{remark}
If $L=K$ is a knot, then
\[
\HY(K)\cong H_{\KR}^{\text{red}}(K)\otimes_{\ring} \ring[x_1,y_1,\theta_1]
\]
as a triply graded module over $\ring[x_1,y_1,\theta_1]$.
\end{remark}

Now, let $m,n,r$ be non-negative integers with $m,n$ coprime.  Then $T(m,n)$ is a knot and $T(mr,nr)$ is an $r$-component link where each component is a copy of $T(m,n)$.   There is a natural link-splitting map (see Corollary 4.14 in \cite{GorHog17})
\begin{equation}\label{eq:Tmn splitting map}
\HY(T(mr,nr))\rightarrow \HY(T(m,n))^{\otimes r} = H_{\KR}^{\text{red}}(T(m,n))\otimes \ring[\mathbf{x},\mathbf{y},\boldsymbol{\theta}].
\end{equation}
The splitting map here has degree zero because the components $T(mr,nr)$ can be unlinked by a sequence of positive-to-negative crossing changes.  Results in this paper show that $H_{\KR}(T(m,n))$ is supported in even cohomological degrees, so Theorem 4.21 in \cite{GorHog17} says that the map in \eqref{eq:Tmn splitting map} is injective.

\begin{problem}\label{prob:image of splitting}
Compute the image of $\HY(T(mr,nr))$ inside $\HY(T(m,n))^{\otimes r}$.
\end{problem}
\begin{remark}
One of the major results of \cite{GorHog17} solves Problem \ref{prob:image of splitting} in the special case $m=1$, with coefficients in $\C$.  In this case $T(1,n)=U$ is the unknot, with $H_{\KR}^{\text{red}}(U;\C)= \C$, and
\[
\HY(T(r,nr);\C)\subset \C[x_1,\ldots,x_r,y_1,\ldots,y_r]\otimes\Lambda[\theta_1,\ldots,\theta_r]
\]
is the ideal generated by the sign component with respect to the $S_r$-action which simultaneously permutes the three sets of variables.
\end{remark}
\begin{remark}
The solution of Problem \ref{prob:image of splitting} would compute $\HY(T(mr,nr))$ as a module over $\ring[\mathbf{x},\mathbf{y},\boldsymbol{\theta}]$, and it would also compute the undeformed homology $H_{\KR}(T(mr,nr))$ as a module over $\ring[\mathbf{x},\boldsymbol{\theta}]$, via
\[
H_{\KR}(T(mr,nr)) \cong \HY(T(mr,nr)) \Big/ (y_1,\ldots,y_r)\HY(T(mr,nr)),
\]
thereby also solving Problem \ref{prob:HKR as module}.
\end{remark}

\subsubsection{Negative torus links}
\label{sss:negative}
If $T(m,n)$ is the closure of the braid $X_{m,n}$ from \eqref{eq:Xmn}, then the negative torus link $T(m,-n)=T(-m,n)$ is the closure of $X_{m,n}\inv$.  The complexes which compute $H_{\KR}$ are dual to one another, as complexes of $R$-modules:
\[
\HH(X_{m,n}\inv) \cong \Hom_R(\HH(X_{m,n},R))
\]
up to a regrading (see Corollary 1.12 in \cite{GHMN-pp}.  Since we compute $\HH(X_{m,n})$ only as a complex of $\ring$-modules, we are unable to make computations for negative torus links.

Note also that in degree zero, Hochschild cohomology is just $\HH^0:=\Hom_{R\otimes R}(R,-)$.   The torus link $T(m,n)$ is also the closure of the braid $\b_{m,n}:=(\sigma_1\cdots\sigma_{m-1})^n$, and the full twist braid is $\FT_m:=\b_{m,m}$ acts as a sort of Serre functor (Theorem 1.1 in \cite{GHMN-pp}), from which it follows that
\[
\HH^0(\b_{m,-n})\simeq \uHom(\HH^0(\b_{m,n-m}), R_m). 
\]
In other words, certain questions for negative torus links can be translated into questions for positive torus links.  This is one very compelling reason why one might be interested in the structure of $\HH(\b)$ as a complex of $R$-modules.

\subsection{Organization}

In \S \ref{s:prelims} we set up notation and recall some essential background.  \S \ref{ss:complexes} concerns basics of complexes. Particularly important is the notion of a one-sided twisted complex and Lemma \ref{lemma:simplifications}, which allows us to simplify one-sided twisted complexes up to homotopy. In \S \ref{ss:SBim and Rouquier} we recall Soergel bimodules and Rouquier complexes.  This includes Hochschild cohomology (\S \ref{sss:HH}) and the Markov moves (\S \ref{sss:markov}).  Finally \S \ref{ss:projectors} briefly recalls the essential properties of the complexes $\KB_n$, first constructed in \cite{HogSym-GT}.

Section \S \ref{s:torus links} is the heart of the paper.  In \S \ref{ss:diagrams} we set up diagrammatic notation which will be heavily used in our main constructions and computations.  We also discuss diagrams for torus links (\S \ref{sss:Tmn diagram}).  In \S \ref{ss:the complexes} we introduce the complexes $\CB(v,w)$ and state the main theorem concerning $\HH(\CB(v,w))$ (Theorem \ref{thm:main thm}).  In \S \ref{ss:computations} we prove Theorem \ref{thm:main thm}.

The short \S \ref{s:colored} sketches the definition of $\Sym^l$-colored triply graded Khovanov-Rozansky homology and explains how $\Sym^l$-colored homology of torus knots arises as a special case of $\HH(\CB(v,w))$ (see Theorem \ref{thm:colored}).

Finally, in \S \ref{s:comparison} we compare the computations in this paper with those in \cite{Hog17b} (see Theorem \ref{thm:comparison}).

\subsection*{Acknowledgements}
The first author was supported by NSF grant DMS 1702274.  The authors would also like to thank Eugene Gorsky, Mikhail Mazin, and Monica Vazirani for their interest and comments on an earlier draft.

\section{Preliminaries}
\label{s:prelims}
\subsection{Complexes}
\label{ss:complexes}
Let $\AS$ an additive category.  We let $\Ch(\AS)$ be the category of (co)chain complexes
\[
\cdots \rightarrow X^k\rightarrow X^{k+1}\rightarrow\cdots
\]
and degree zero chain maps.  We always adopt the cohomological conventions for gradings of complexes,  and henceforth we will omit the prefix ``co-''.  We let $\KC(\AS)$ denote the homotopy category of $\Ch(\AS)$, with the same objects, but morphisms regarded up to chain homotopy.  Superscripts $+,-,b$ will denote full subcategories of complexes $X$ with $X^k=0$ for $k\ll 0$, $k\gg 0$, and $k$ outside a finite set, respectively.

For $X$ a complex and $k\in \Z$ let $X[k]$ denote the complex with $X[k]^l=X^{k+l}$ and $d_{X[k]}=(-1)^k d_X$.  In particular $[1]$ shifts $X$ to the left and negates the differential.

Associated to two complexes $X,Y\in \Ch(\AS)$ we have the hom complex $\uHom^{\Z}_{\Ch(\AS)}(X,Y)$, which in degree $k$ is
\[
\uHom^{k}_{\Ch(\AS)}(X,Y):=\prod_{i\in \Z}\Hom_{\AS}(X^i,Y^{i+k}),
\]
with differential given by the super-commutator
\[
f\mapsto [d,f]:=d_{Y}\circ f - (-1)^{|f|}f\circ d_{X}.
\]

Suppose $X=(X,d_X)$ is a chain complex with differential $d_X$, and let $\a\in \uEnd^\Z(X,X)$ be a degree $1$ element satisfying the \emph{Maurer-Cartan} equation
\[
[d,\a]+\a\circ \a = 0
\]
Then $(d_X+\a)^2=0$, and we can consider $X$ with the ``twisted differential'' $d_X+\a$, denoted
\[
\tw_\a(X):=(X,d_X+\a).
\]

\begin{remark}
Any chain complex $X=(X,d)$ can be written as $X=\tw_{d}(\bigoplus_{k\in \Z} X^k[-k])$.
\end{remark}

\begin{definition}\label{def:one sided} A \emph{one-sided twisted complex} is a complex of the form $\tw_\a(\bigoplus_{i\in S} X_i)$ where:
\begin{enumerate}
\item $X_i\in \Ch(\AS)$ are complexes indexed by a finite poset $S$.
\item the component $\a_{ij}\in \uHom^1(X_j,X_i)$ is zero unless $j<i$.
\end{enumerate}
\end{definition}

The following is the main technical tool for simplifying the complexes appearing in this paper.
\begin{lemma}\label{lemma:simplifications}
Suppose $S$ is a finite poset, and let $X_i,Y_i\in \Ch(\AS)$ be complexes indexed by $i\in S$.   Then any family of homotopy equivalences $X_i\simeq Y_i$ ($i\in S$) induces a (one-sided) twist $\b$ acting on $\bigoplus_i Y_i$ and a homotopy equivalence $\tw_\b(\bigoplus_i Y_i)\simeq \tw_\a(\bigoplus_i X_i)$.
\end{lemma}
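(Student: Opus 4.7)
The plan is to induct on $|S|$. The base cases $|S|\in\{0,1\}$ are trivial: one-sidedness forces $\a = 0$, so the statement reduces to taking the direct sum of the given equivalences $X_i \simeq Y_i$ (with $\b=0$).

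For the inductive step, choose a maximal element $m\in S$ and let $S' := S\setminus\{m\}$. Decompose the twist as $\a = \a' + \g$, where $\a'$ collects all components between indices in $S'$, and $\g := \sum_{j<m}\a_{mj}\in \uHom^1\bigl(\bigoplus_{j\in S'} X_j,\ X_m\bigr)$ collects the components with target $X_m$. Since $m$ is maximal there are no components with source $X_m$, so all ``$X_m$-to-$X_m$'' contributions in $\a\circ\a$ vanish, and the Maurer-Cartan equation $[d,\a]+\a\circ\a = 0$ splits into
\[
[d,\a'] + \a'\circ\a' = 0 \qquad\text{and}\qquad [d,\g] + \g\circ\a' = 0.
\]
The first identity says $\a'$ is a one-sided Maurer-Cartan element on $\bigoplus_{S'} X_i$, and the second says $\g$ is a chain map $\tw_{\a'}(\bigoplus_{S'}X_i)\to X_m$. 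Consequently $\tw_\a(\bigoplus_S X_i) \cong \Cone(\g)$, up to the standard shift.

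By the inductive hypothesis applied to $S'$, there exist a one-sided twist $\b'$ on $\bigoplus_{S'} Y_i$ and a homotopy equivalence $\phi: \tw_{\b'}(\bigoplus_{S'}Y_i)\xrightarrow{\simeq}\tw_{\a'}(\bigoplus_{S'}X_i)$. Combined with the given equivalence $\psi: X_m\xrightarrow{\simeq} Y_m$, the composite $\d := \psi\circ\g\circ\phi$ is again a chain map, so $\b := \b' + \d$ is a Maurer-Cartan element. One-sidedness of $\b$ is immediate: $\b'$ is one-sided on $S'$ by construction, and $\d$ only has target the maximal index $m$. Thus $\tw_\b(\bigoplus_S Y_i)\cong\Cone(\d)$, and the standard fact that homotopy equivalences of chain maps (here the square relating $\g$ and $\d$ via $\phi,\psi$, which commutes on the nose) induce homotopy equivalences of their mapping cones yields $\Cone(\d)\simeq\Cone(\g)$, completing the induction.

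The main obstacle is purely bookkeeping: checking that the Maurer-Cartan equation splits cleanly as indicated (this is what forces the choice of a maximum rather than an arbitrary element) and that the inductively built $\b$ retains one-sidedness on the full poset $S$. Both are straightforward. A slightly more uniform alternative would be to cite the homological perturbation lemma directly, applied to the additive homotopy equivalence $\bigoplus_i Y_i \simeq \bigoplus_i X_i$ with perturbation $\a$; the convergence hypothesis is automatic because $S$ is finite and $\a$ is strictly upper-triangular with respect to the poset order, hence nilpotent.
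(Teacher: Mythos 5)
The paper states Lemma \ref{lemma:simplifications} without proof---it is invoked as a standard tool (homotopy invariance of one-sided twisted complexes, i.e.\ a finite homological perturbation argument)---so there is no argument of the authors' to compare yours against; I can only assess your proof on its own terms. It is essentially correct, and it is the standard induction. The splitting of the Maurer--Cartan equation at a maximal element is right: since $m$ is maximal, $\a$ has no component with source $X_m$, hence $\a'\circ\g=0=\g\circ\g$ and the equation decouples exactly as you say; the same computation shows $\b=\b'+\d$ is Maurer--Cartan, and one-sidedness is clear.

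One step deserves more care. The square you describe as commuting on the nose relates $\g$ and $\d=\psi\circ\g\circ\phi$ via $\phi\colon \tw_{\b'}(\bigoplus_{S'}Y_i)\to\tw_{\a'}(\bigoplus_{S'}X_i)$ and $\psi\colon X_m\to Y_m$, and these two comparison maps point in opposite directions; as written it is not a morphism from the triangle of $\g$ to the triangle of $\d$ (or vice versa), so functoriality of the cone does not apply verbatim. The standard fix: replace $\psi$ (or $\phi$) by a homotopy inverse, after which the relevant square commutes only up to homotopy; choosing such a homotopy yields an honest chain map $\Cone(\g)\to\Cone(\d)$ fitting into a morphism of triangles in $\KC(\AS)$ whose other two maps are homotopy equivalences, and the triangulated five lemma gives $\Cone(\g)\simeq\Cone(\d)$. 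With that adjustment the induction closes. Your alternative via the homological perturbation lemma is also fine, with the usual caveat that the given homotopy equivalences should first be upgraded to deformation-retract data (e.g.\ via mapping cylinders); nilpotence of $\a$, being strictly triangular over a finite poset, indeed guarantees convergence, and this route has the advantage of handling the infinite-poset variant mentioned in the paper's remark under the stated finiteness conditions.
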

In other words, the Maurer-Cartan element $\a$ can be transferred from $\bigoplus_i X_i$ to $\bigoplus_iY_i$, so that the resulting twisted complexes are homotopy equivalent.

\begin{remark}
We can also allow infinite posets in the statement of Lemma \ref{lemma:simplifications}. There are actually two kinds of infinite one-sided twisted complexes, those of the form $\tw_\a(\bigoplus_{i\in S} X_i)$, and those of the form $\tw_\a(\prod_{i\in S}X_i)$.  In the direct sum (respectively direct product) case, the statement of Lemma \ref{lemma:simplifications} requires that for each element $i\in S$ there are only finitely many $j\in S$ with $j>i$ (respectively $j<i$).
\end{remark}





\begin{notation}\label{notn:cone}
Given complexes $X_0,X_1\in \Ch(\AS)$ and a degree 1 chain map $f\in \uHom^1(X_0,X_1)$ we have a twisted complex of the form $\tw_\a(X_0\oplus X_1)$ where $\a=\smMatrix{0&0\\f &0}$.  Such twisted complexes will be denoted by
\[
\left(X_0\buildrel f\over \rightarrow X_1\right)
\]
If $g:Y_0\rightarrow Y_1$ is a degree zero chain map, then the one-sided twisted complex $(Y_0[1]\buildrel g\over\longrightarrow Y_1)$ is the usual \emph{mapping cone} of $g$.
\end{notation}

\subsection{Gradings and shifts}
\label{ss:shifts}

Let $\CS^{\Z\times \Z}(\ring)$ denote the category of $\Z\times \Z$-graded complexes of $\ring$-modules.  An object of this category is a pair $(X,d)$ where $X=\bigoplus_{i,j}X^{i,j}$ is a $\Z\times \Z$-graded $\ring$-module and $d$ is a degree $(0,1)$ endomorphism of $X$ satisfying $d^2=0$.    Morphisms in $\CS^{\Z\times \Z}(\ring)$ are by definition degree zero $\ring$-linear maps which commute with the differentials.



\newcommand{\rk}{\operatorname{rk}}

If $X$ is a bigraded $\ring$-module then we write its \emph{Poincar\'e series} or \emph{graded rank} as
\[
\operatorname{grk}(X) := \sum_{i,j\in \Z} Q^iT^j \rk(X^{i,j}).
\]

The formal variables $Q$ and $T$ (and monomials therein) will also be regarded as the grading shift functors,
\[
Q(X)^{i,j} := X^{i-1,j},\qquad\qquad T(X)^{i,j} := X^{i,j-1}.
\]
  If $X$ is equipped with a differential $d_X$ then $Q^i T^j(X)$ is a complex with differential $(-1)^j d_X$ (the sign is conventional).

\begin{remark}
The complex $Q^iT^j(X)$ would traditionally be written as $X(-i)[-j]$.
\end{remark}

\begin{notation}\label{notn:sums of shifts}
We extend the notation $Q^iT^j X $ to allow non-negative integral linear combinations of monomials.   Given $f(Q,T)=\sum_{i,j}r_{ij} Q^iT^j$ with $r_{ij}\in \Z$ and $X\in \CS^{\Z\times \Z}(\ring)$, then we let $f(Q,T) X $ denote the complex
\[
f(Q,T) X := \bigoplus_{i,j} Q^iT^j X^{\oplus r_{ij}}.
\]
\end{notation}

\subsubsection{Triply graded complexes}
\label{sss:triply graded cxs}
Let $\CS^{\Z\times \Z\times \Z}(\ring)$ denote the category of $\Z\times \Z\times \Z$-graded complexes of $\ring$-modules.   An object of this category is a pair $(X,d)$ where $X$ is a $\Z\times \Z\times \Z$-graded $\ring$-module and $d$ is a degree $(0,0,1)$ differential.  The morphisms in $\CS^{\Z\times \Z\times \Z}(\ring)$ are degree $(0,0,0)$ $\ring$-linear maps which commute with the differentials.

An ungraded $\ring$-module can be regarded as a triply graded $\ring$-module supported in degree $(0,0,0)$.  We let $Q,A,T$ denote the shifts in tridegree, so that the triply graded module $X$ can be written $X = \bigoplus_{i,j,k}Q^iA^jT^k (X^{i,j,k})$.

As the notation suggests, in this paper, bigraded $\ring$-modules can be regarded as trigraded $\ring$-modules, supported in degrees $\Z\times\{0\}\times \Z$.  As in the bigraded setting, the shift $T$ introduces a sign in all differentials: if $X$ is equipped with a degree $(0,0,1)$ differential $d_X$ then $Q^iA^jT^k(X)$ is a complex with differential appearing with the conventional sign $(-1)^k d_X$.

The \emph{Poincar\'e series} or \emph{(tri)graded rank} of a trigraded $\ring$-module $X$ is
\[
\operatorname{grk}(X) = \sum_{i,j,k\in \Z} Q^i A^j T^k \operatorname{rk}(X^{i,j,k}).
\]

If $f(Q,A,T)$ is a Laurent polynomial with non-negative integer coefficients and $X\in \CS^{\Z\times \Z\times \Z}(\ring)$, then $f(Q,A,T)(X)$ is defined in a manner analogous to Notation \ref{notn:sums of shifts}.

\begin{notation}\label{notn:degrees}
If $X$ is a trigraded $\ring$-module and $x\in X$ is trihomogeneous of tridegree $\deg(x)=(i,j,k)$ then we also write $\deg(x) = Q^iA^j T^k$. 
\end{notation}

It is often convenient to work with the formal variables $q,t,a$ defined below:
\begin{equation}\label{eq:qat}
q:=Q^2,\qquad\qquad t:=T^2Q^{-2},\qquad \qquad a:=AQ^{-2}.
\end{equation}
Thus, a monomial $q^i a^j t^k$ can refer to a grading shift functor acting on $\CS^{\Z\times\Z\times \Z}(\ring)$, or the degree of a trihomogeneous element in a  trigraded $\ring$-module.

\subsection{Soergel bimodules and Rouquier complexes}
\label{ss:SBim and Rouquier}
We very briefly recall some background concerning Soergel bimodules, omitting many details, mostly for the purposes for setting up notation.

Our results on Khovanov-Rozansky homology hold over the integers (and over any ring of coefficients by extension of scalars).  For this reason we do not really discuss Soergel bimodules as is usually meant, but rather Bott-Samelson bimodules.  When the ring of coefficients is sufficiently nice (e.g.~an infinite field of characteristic $\neq 2$) the category of Soergel bimodules is the idempotent completion of the category of Bott-Samelson bimodules, by definition.


For $n\in \Z_{\geq 1}$  we let $\BS_n$ denote the monoidal category of Bott-Samelson bimodules associated to $S_n$ with its $n$-dimensional realization $\ring^{\oplus n}$, and we let $\Ch(\BS_n)$ denote the category of complexes over $S_n$ with morphisms degree zero chain maps.

More precisely, let $R_n:=\ring[x_1,\ldots,x_n]$, thought of as a graded ring via $\deg(x_i)=2$.   Let $R_n\gbimod$ denote the category of graded $R_n,R_n$-bimodules, with degree zero $R_n$-bilinear maps as morphisms.   When the index $n$ is understood we will simply write $R=R_n$.

For $i=1,\ldots,n-1$ there is a distinguished bimodule $B_i := R\otimes_{R^{s_i}} R(1)$ where $s_i=(i,i+1)$ denotes the simple transposition in $S_n$ and $R^{s_i}\subset R$ is the subalgebra of $s_i$-invariant polynomials.  Also $(1) = Q\inv$ is the grading shift which places $1\otimes 1$ in degree $-1$.

A \emph{Bott-Samelson bimodule} is any bimodule isomorphic to a direct sum of shifts of bimodules of the form $B_{i_1}\otimes_R\cdots\otimes_R B_{i_r}$; these form a full subcategory of $R_\gbimod$, denoted $\BS_n$.  By convention, the trivial bimodule $R$ is a Bott-Samelson bimodule (corresponding to the empty tensor product).

\begin{remark}
Most of the subtleties in Soergel bimodules arise when discussing direct summands of Bott-Samelson bimodules (for instance calculating the Grothendieck group $K_0$ of this category is quite subtle in general).  In this paper such subtleties never arise, because all of the relevant constructions (for instance Rouquier complexes and the subsequent Markov moves, defined below) take place within the homotopy category of complexes of Bott-Samelson bimodules.
\end{remark}

\begin{remark}
All of the constructions and results below are valid with any ring of coefficients, since homotopy equivalences of complexes remain homotopy equivalences after extension of scalars.  When the ring of coefficients is sufficiently nice, then results of Soergel's apply and the inclusion
\[
\KC^b(\BS_n)\hookrightarrow \KC^b(\SBim_n)
\]
is an equivalence of categories.  Thus, there is no essential loss in restricting to Bott-Samelson bimodules.
\end{remark}

\subsubsection{Rouquier complexes}
\label{sss:rouquier}
Let $\Br_n$ be the braid group on $n$ strands.  For each $\b\in \Br_n$ we have the \emph{Rouquier complex} $F(\b)\in \KC^b(\BS_n)$, well-defined up to homotopy equivalence, defined as follows.  If $\sigma_i\in \Br_n$ denotes the elementary braid generator (a positive crossing relating strands $i$ and $i+1$) then we define
\[
F(\sigma_i) := B_i(-1)\rightarrow \underline{R},\qquad \qquad  F(\sigma_i\inv) = \underline{R}\rightarrow B(1).
\]

\begin{remark}
In the literature it is common to work with a different normalization, related to ours by $Q^{-e}T^eF(\b)$ where $e$ is the signed number of crossings in $\b$ (number of positive crossings $\sigma_i$ minus number of negative crossings $\sigma_i$).

Our chosen normalization will help make computations later in the paper cleaner.  For instance the positive Markov II move is satisfied with no additional shift \eqref{eq:Markov II}, and the complexes $\KB_l$ absorb braids with no additional shifts \eqref{eq:crossingAbsorption}.
\end{remark}

\subsubsection{Hochschild cohomology}
\label{sss:HH}
The Hochschild cohomology of a graded $R,R$-bimodule $B$ is a bigraded $\ring$-module $\HH^{\cdot,\cdot}(B)$ satisfying
\[
\bigoplus_{i\in \Z}\HH^{i,j}(B) = \Ext^j_{R\otimes R}(R, B).
\]

\begin{remark}
Actually the Hochschild cohomology of a graded $R,R$-bimodule is a bigraded $R$-module, but in this paper we ignore the $R$-module structure, and view $\HH(X)$ as a bigraded $\ring$-module.   
\end{remark}

\begin{notation}
In this paper we consider Hochschild cohomology exlusively, and never consider Hochschild homology, so $\HH(B)$ will always mean Hochschild cohomology of a bimodule.
\end{notation}

Since $\HH$ is an additive functor it can be extended to a functor on the level of complexes $\Ch(R\gbimod)\rightarrow \CS^{\Z\times \Z\times\Z}(\ring)$.   In other words, $\HH$ of a complex $X\in \Ch(R\gbimod)$ is obtained by applying $\HH$ term-wise
\[
\HH(X) = \begin{diagram}
\cdots  &\rTo^{\HH(d)} & \HH(X^k)  &\rTo^{\HH(d)} & \HH(X^{k+1})  &\rTo^{\HH(d)} &\cdots.
\end{diagram}
\]

Alternatively, if $X = (X,d_X)$ is a complex of $R,R$-bimodules then we first ignore the differential and regard $X$ as the direct sum of its chain objects $X=\bigoplus_k T^k(X^k)$.  Each $X^k$ is a graded $R_n,R_n$-bimodule, hence $X$ can be regarded as a bigraded $R_n,R_n$-bimodule.  The Hochschild cohomology $\HH(X)=\bigoplus_{i,j,k}Q^i A^j T^k(\HH^{i,j}(X^k))$ is then a triply graded $\ring$-module.  Because Hochschild cohomology is functorial, $\HH(X)$ is equipped with a differential $\HH(d_X)$ of degree $(0,0,1)$.




\begin{notation}
If $X\in \Ch(\BS_n)$ and $Y\in \Ch(\BS_m)$, we will write $X\sim Y$ if $\HH(X)\simeq \HH(Y)$.  Similarly, given $f(Q,A,T)\in \N[Q^{\pm},A^{\pm},T^{\pm}]$ we say $X\sim f(Q,A,T)Y$ if $\HH(X)\simeq f(Q,A,T)\HH(Y)$.
\end{notation}

\subsubsection{Markov moves}
\label{sss:markov}
The identities in this section are well-known; see \cite{Kras10} for Markov moves over $\Z$ (alternate proofs can be found in \cite{HogSym-GT}, \S 3.3).

If $X\in \KC(\BS_n)$ then we have the Markov I move:
\begin{equation}
\HH(F(\b)\otimes X \otimes F(\b\inv))\simeq \HH(X)\qquad \text{for all }\b\in \Br_n,
\end{equation}
and the Markov II move:
\begin{equation}\label{eq:Markov II}
\HH((X\sqcup \one_1) \otimes F(\sigma_n))\simeq \HH(X),  \qquad \HH((X\sqcup \one_1) \otimes F(\sigma_n\inv))  \simeq Q^{-4}AT \HH(X).
\end{equation}

We will also need the following
\begin{equation}\label{eq:tr of id}
\HH(X\sqcup \one_1) \cong \HH(X)\otimes_{\ring} \ring[x,\theta]  \cong \frac{1+a}{1-q} \HH(X)
\end{equation}
Here, $\theta$ is a formal odd variable of degree $a$ and $x$ is a formal even variable of degree $q$, so $\ring[x,\theta]$ is polynomial in $x$ and exterior in $\theta$.

\subsubsection{Normalization}
\label{sss:normalization}
There is a group homomorphism $\Br_n\rightarrow \Z$ sending $\sigma_i^{\pm}\mapsto \pm 1$.  The image of $\b$ will be denoted $e(\b)$.  The number $n$ is called the \emph{braid width} or \emph{braid index}, and $e(\b)$ is the \emph{writhe} or \emph{exponent sum}.

To obtain an honest link invariant, we normalize $\HH(F(\b))$ by applying a shift $\Sigma$ which depends on the braid width $n$, the writhe $e$, and the number $c$ of components of $L=\hat{\b}$.
 
One normalization which works well is
\[
H_{\KR}^{\operatorname{norm}}(L) = (Q^{-4}AT)^{(e+c-n)/2} H(\HH(F(\b))).
\]
Note that $e+c-n$ is always even (exercise), so that the above shift makes sense.

\subsection{The complexes $\KB_n$}
\label{ss:projectors}
In \cite{HogSym-GT} the first named author constructed complexes $\KB_n\simeq \KC^b(\BS_n)$ satisfying the following properties:
\begin{subequations}
\begin{equation}\label{eq:crossingAbsorption}
\KB_n \otimes F(\b)\simeq \KB_n \simeq F(\b)\otimes \KB_n)
\end{equation}
\begin{equation}\label{eq:Kmarkov}
\HH((X\sqcup \one_1)\otimes \KB_{n+1}) \simeq (t^{n}+a)\HH(X).
\end{equation}
\begin{equation}\label{eq:Krecursion}
(\one_1\sqcup \KB_n)\otimes L_{n+1} \simeq t^{-n}\left(\KB_{n+1} \rightarrow q(\one_1\sqcup \KB_n)\right),
\end{equation}
\end{subequations}
This is true for all complexes $X\in \BS_n$ and all braids $\b\in \Br_n$.  In the last line we introduced the braid $L_{n+1}:=F(\sigma_1\cdots\sigma_{n-1}\sigma_n^2\sigma_{n-1}\cdots\sigma_1)$.

In \cite{ElHog16a} it was shown how these relations yield a calculus for computing  $\HH(X)$ (at least partially) when $X$ is a Rouquier complex tensored with some $\one_a\sqcup \KB_b\sqcup \one_c$.

\section{Torus link homology}
\label{s:torus links}

In this section we introduce some useful diagrammatic shorthand.  We then define a special family of complexes of Soergel bimodules and compute their $\HH$ recursively using this diagrammatic shorthand.  As a special case we obtain $H_{\KR}$ of positive torus links, generalizing \cite{Hog17b,MellitTorus-pp}.


\subsection{Diagrams for braids, links, and complexes}
\label{ss:diagrams}
%
A strand with the label $n$ will denote $n$ parallel copies of that strand (drawn in the plane of the page):
\begin{equation}\label{eq:basiccables}
\begin{minipage}{.4in}
\labellist
\small
\pinlabel $n$ at -8 15
\endlabellist
\begin{center}\includegraphics[scale=.85]{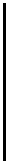}\end{center} 
\end{minipage}
\ \ = \ \
\begin{minipage}{.5in}
\labellist
\small
\pinlabel $\underbrace{\ \ \ \ \ \ \ }_n$ at 10 -6
\endlabellist
\begin{center}\includegraphics[scale=.85]{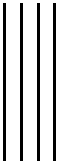}\end{center} 
\end{minipage},
\qquad \qquad
\begin{minipage}{.8in}
\labellist
\small
\pinlabel $n$ at 0 10
\pinlabel $m$ at 50 10
\endlabellist
\begin{center}\includegraphics[scale=.85]{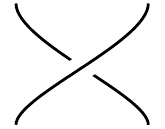}\end{center} 
\end{minipage}
\ \ = \ \ 
\begin{minipage}{.8in}
\labellist
\small
\pinlabel $\underbrace{ \ \ \ \ \ \ \ \  }_n$ at 9 -7
\pinlabel $\underbrace{ \ \ \ \ \ \ \ \ \ \ \   }_m$ at 55 -7
\endlabellist
\begin{center}\includegraphics[scale=.85]{fig/cabledCrossing}\end{center} 
\end{minipage} 
\end{equation}
\vskip7pt
We will also introduce diagrams which represent the identity braid on $n+m$ strands (regrouped):
\[
\begin{minipage}{.8in}
\labellist
\small
\pinlabel $m$ at -7 30
\pinlabel $n$ at 44 30
\pinlabel $m+n$ at 0 5
\endlabellist
\begin{center}\includegraphics[scale=.85]{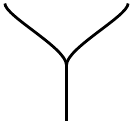}\end{center} 
\end{minipage} 
\ \ :=\ \ 
\begin{minipage}{.8in}
\labellist
\small
\pinlabel $m$ at 0 15
\pinlabel $n$ at 30 15
\endlabellist
\begin{center}\includegraphics[scale=.85]{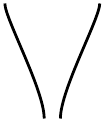}\end{center} 
\end{minipage},
\qquad\qquad
\begin{minipage}{.8in}
\labellist
\small
\pinlabel $m$ at -7 5
\pinlabel $n$ at 44 5
\pinlabel $m+n$ at 0 30
\endlabellist
\begin{center}\includegraphics[scale=.85]{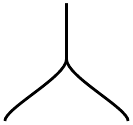}\end{center} 
\end{minipage} 
\ \ :=\ \ 
\begin{minipage}{.8in}
\labellist
\small
\pinlabel $m$ at 0 18
\pinlabel $n$ at 30 18
\endlabellist
\begin{center}\includegraphics[scale=.85]{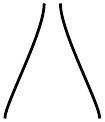}\end{center} 
\end{minipage} 
\]

\subsubsection{Diagrams for torus links}
\label{sss:Tmn diagram}
\begin{proposition}
If $m,n\geq 0$ then the torus link $T(m,n)$ is the closure of the braid
\[
X_{m,n} \ := \ 
\begin{minipage}{.8in}
\labellist
\small
\pinlabel $m$ at 0 10
\pinlabel $n$ at 50 10
\endlabellist
\begin{center}\includegraphics[scale=.85]{fig/crossing_2}\end{center} 
\end{minipage}.
\]
Graphically this is
\begin{equation}\label{eq:symmetric diagram}
T(m,n) \  \ \ = \ \ \ 
\begin{minipage}{1.3in}
\labellist
\small
\pinlabel $m$ at -5 40
\pinlabel $n$ at 45 40
\pinlabel $m+n$ at 3 100
\endlabellist
\begin{center}\includegraphics[scale=.85]{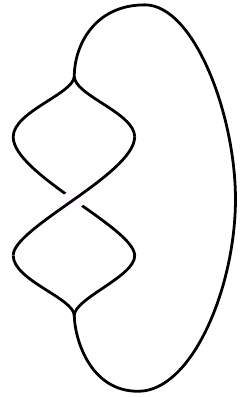}\end{center} 
\end{minipage}
\end{equation}
\end{proposition}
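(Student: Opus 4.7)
My plan is to verify the displayed equality by showing that the braid $X_{m,n}\in\Br_{m+n}$ is Markov equivalent to the standard torus braid $(\sigma_1\cdots\sigma_{m-1})^n\in\Br_m$, whose closure is $T(m,n)$ by definition. I would proceed by induction on $n$. The base case $n=0$ is trivial (both braids are the identity), and the case $n=1$ is handled by a cyclic conjugation followed by a single Markov~II destabilization applied to $X_{m,1}=\sigma_m\sigma_{m-1}\cdots\sigma_1\in\Br_{m+1}$, whose Markov equivalence class is represented by $\sigma_{m-1}\cdots\sigma_1\in\Br_m$, itself conjugate to $\sigma_1\cdots\sigma_{m-1}$.

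For the inductive step, I would use the identity
\[
X_{m,n+1} = (\sigma_{n+1}\sigma_{n+2}\cdots\sigma_{m+n})\cdot X_{m,n}\in\Br_{m+n+1},
\]
where $X_{m,n}$ is embedded in $\Br_{m+n+1}$ using the first $m+n$ strands. Geometrically, this says that $X_{m,n+1}$ is obtained from $X_{m,n}$ by sweeping a new rightmost strand past the $m$ strands at positions $n+1,\ldots,n+m$. A Markov~I conjugation by $(\sigma_{n+1}\cdots\sigma_{m+n})^{-1}$ moves the prefix to the end of the word; since $\sigma_{m+n}$ then appears exactly once there and no other factor involves it, a single Markov~II destabilization reduces the braid to one in $\Br_{m+n}$. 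Iterating this reduction $n+1$ times and then applying braid relations together with further Markov~I conjugations brings the resulting element to $(\sigma_1\cdots\sigma_{m-1})^{n+1}\in\Br_m$.

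The hard part will be carefully tracking the braid words through the sequence of Markov moves without getting lost in the generator indices; this can be systematized by using the identity $X_{m,n}=\Delta_{m+n}\cdot(\Delta_m\sqcup\Delta_n)^{-1}$, where $\Delta_k$ denotes the positive half-twist on $k$ strands, together with standard Garside-theoretic manipulations of positive braids. As an alternative, a purely geometric argument is available: isotope the diagram in \eqref{eq:symmetric diagram} into a thin tubular neighborhood of a standardly embedded torus $T^2\subset S^3$ so that the $m+n$ outer closure arcs run parallel to the longitude and the cabled crossing lies in a small disk; the resulting link visibly has $\gcd(m,n)$ components, each of them isotopic on $T^2$ to a $(m/\gcd(m,n),n/\gcd(m,n))$-torus curve, recovering $T(m,n)$ by definition.
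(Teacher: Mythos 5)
Your closing geometric argument---isotoping the closure of $X_{m,n}$ onto a standardly embedded torus and identifying the resulting multicurve as $\gcd(m,n)$ parallel copies of the $(m/\gcd,n/\gcd)$-curve---is essentially the paper's own proof, which places the closure on a punctured standard torus and pins down the link by computing its class in $H_1(S^1\times S^1)$ as $(m,n)$ via intersections with the co-cores of the two $1$-handles. That route is fine (and, like the paper's, it is only a sketch; one should at least remark that no component of the multicurve is inessential, which holds because every strand meets the co-core arcs).

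Your primary Markov-theoretic plan is a genuinely different route, and it is viable, but as written the inductive step has a real gap. Up to the bottom-to-top versus top-to-bottom reading convention, the identity $X_{m,n+1}=(\sigma_{n+1}\cdots\sigma_{m+n})X_{m,n}$ is correct, and one Markov~I conjugation plus one Markov~II destabilization is legitimate because $\sigma_{m+n}$ occurs exactly once. However, what you land on in $\Br_{m+n}$ is not $X_{m,n}$ but $X_{m,n}\cdot(\sigma_{n+1}\cdots\sigma_{m+n-1})$, so the statement you are inducting on does not recur: ``iterating this reduction'' is not available, since at the next stage the top generator $\sigma_{m+n-1}$ appears twice (once in the new sweep factor coming from $X_{m,n}=(\sigma_n\cdots\sigma_{m+n-1})X_{m,n-1}$ and once in the residual word), and braid relations and further conjugations are needed before any destabilization---already for $m=n=2$ one must pass from $\sigma_2^2\sigma_1$ through $\sigma_1\sigma_2\sigma_1$ to reach $\sigma_1^2\sigma_2$ before destabilizing to $\sigma_1^2$. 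This accumulation of residual sweep words is exactly where the content of the proposition lies, and your write-up only asserts that ``braid relations together with further Markov~I conjugations'' finish the job. To close the induction you should strengthen the inductive statement (e.g.\ control the closure of $X_{m,k}$ times an explicit family of positive residual words), or actually carry out the Garside bookkeeping you allude to via $X_{m,n}=\Delta_{m+n}(\Delta_m\sqcup\Delta_n)^{-1}$, which does organize the indices cleanly because conjugation by $\Delta$ just reverses them. Two cosmetic points: your base case uses $X_{m,1}=\sigma_m\cdots\sigma_1$ while your inductive identity at $n=0$ gives $\sigma_1\cdots\sigma_m$ (fix one convention), and note that your target presentation $(\sigma_1\cdots\sigma_{m-1})^n$ identifies $T(m,n)$ under the braid-closure definition, whereas the paper works directly with the curve-on-the-torus definition; the two are equivalent, but the equivalence is itself the same geometric observation the paper uses.
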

\begin{proof}[Sketch of proof]
The link depicted on the right-hand side of \eqref{eq:symmetric diagram} can be embedded in the surface
\[
\ig{.8}{punctured_torus} \  \ \ \ \ \ \simeq \ \ \ \ \ \ \ \ig{.8}{punctured_torus_2}\ \ ,
\]
which is a standardly embedded 2-dimensional torus in $\R^3$, minus an open disk.  Thus, the braid closure of $X_{m,n}$ is a torus link (positive since all $X_{m,n}$ is clearly a positive braid).  The class in homology $H^1(S^1\times S^1)$ represented by the closure of $X_{m,n}$ can be calculated by counting intersections with the arcs $a$ and $b$ below (co-cores of the indicated 1-handles):
\[
\begin{minipage}{1.3in}
\labellist
\small
\pinlabel $a$ at 20 15
\pinlabel $b$ at 20 110
\endlabellist
\begin{center}\includegraphics[scale=.85]{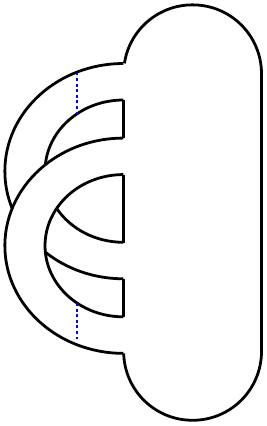}\end{center} 
\end{minipage}.
\]
The numbers of these intersections are $m$ and $n$, respectively.  
\end{proof}

\subsubsection{Representing complexes}
\label{sss:diagrams for cxs}
We will denote complexes in $\KC^b(\BS_n)$ and certain categorical operations on complexes diagrammatically.

A complex $C\in \KC^b(\BS_n)$ will indicated by a diagram
\[
C \ \ \leadsto \ \  
\begin{minipage}{.8in}
\labellist
\small
\pinlabel $n$ at 16 5
\pinlabel $C$ at 22 34
\endlabellist
\begin{center}\includegraphics[scale=.6]{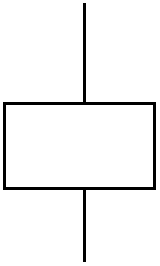}\end{center} 
\end{minipage}.
\]
We have two categorical operations on objects of $\BS_n$, or complexes.  First, we have the tensor product $X\otimes Y=X\otimes_R Y$, defined for $X,Y\in \KC^b(\BS_n)$, and we have the external tensor product $X\sqcup Y:=X\otimes_\Z Y$.  These operations are indicated diagrammatically by
\[
X\otimes Y = \begin{minipage}{.8in}
\labellist
\small
\pinlabel $n$ at 18 5
\pinlabel $X$ at 24 72
\pinlabel $Y$ at 24 31
\endlabellist
\begin{center}\includegraphics[scale=.6]{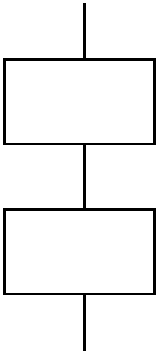}\end{center} 
\end{minipage},
\qquad\qquad 
X\sqcup Y \ \ = \ \ 
\begin{minipage}{.5in}
\labellist
\small
\pinlabel $n$ at 18 5
\pinlabel $X$ at 22 34
\endlabellist
\begin{center}\includegraphics[scale=.6]{fig/box}\end{center} 
\end{minipage}\ \ 
\begin{minipage}{.5in}
\labellist
\small
\pinlabel $m$ at 15 5
\pinlabel $Y$ at 22 34
\endlabellist
\begin{center}\includegraphics[scale=.6]{fig/box}\end{center} 
\end{minipage}.
\]

We will also allow braids as part of our diagrams. See \eqref{eq:Dvw} for example.

\subsection{A distinguished family of complexes}
\label{ss:the complexes}
In this section we introduce the complexes whose Hochschild cohomologies will be computed recursively.  These complexes will involve the projectors $\KB_l$ and also Rouquier complexes associated to so-called ``shuffle braids'', which we recall next.

\subsubsection{Shuffle braids}
\label{sss:shuffles}
Let $v\in\{0,1\}^{r}$ be a sequence.  Let $|v|=v_1+\cdots+v_r$ be the number of ones in $v$.    Let $\pi_v\in S_r$ denote the ``shuffle permutation'' which sends $\{1,\ldots,k\}$ and $\{k+1,\ldots,k+l\}$ to the set of indices $i$ for which $v_i=0$ (respectively $v_i=1$).  Here, $l=|v|$ and $k=r-l$.  Alternatively, $\pi_v$ can be defined inductively by the following rules:
\begin{itemize}
\item if $r=1$, then $\pi_v=\one_1$ is the identity.
\item $\pi_{v1}:=\pi_v\sqcup \one_1$.
\item $\pi_{v0}:=(\pi_v\sqcup\one_1) s_{r-1}\cdots s_{r-l}$, $l=|v|$.
\end{itemize}
 Below is a closed formula for $\pi_v$:
\[
\pi_v = (s_{i_1}\cdots s_1)(s_{i_2}\cdots s_2) \cdots (s_{i_k}\cdots s_{k})
\]
where $\{i_1<i_{2}<\cdots<i_k\}\subset \{1,\ldots,r\}$ are the indices for which $v_{i_j}=0$.

\begin{definition}\label{def:shuffle braids}
If $v\in \{0,1\}^r$, then we let $\a_v\in \Br_r$ denote the positive braid lift of $\pi_v$, and we let $\b_v$ denote the positive braid lift of $\pi_v\inv$.
\end{definition}

The braids $\a_v$ satisfy the following relations:
\begin{equation}\label{eq:av from right}
\a_{v1}  \ \ = \ \ 
\begin{minipage}{.6in}
\labellist
\small
\pinlabel $\a_v$ at 18 18
\pinlabel $k$ at 8 45
\pinlabel $l$ at 28 45
\pinlabel $1$ at 42 45
\endlabellist
\begin{center}\includegraphics[scale=.8]{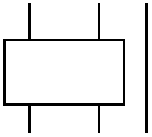}\end{center} 
\end{minipage},
\qquad
\a_{v0}  \ \ = \ \ 
\begin{minipage}{.6in}
\labellist
\small
\pinlabel $\a_v$ at 21 17
\pinlabel $k$ at 8 57
\pinlabel $1$ at 28 57
\pinlabel $l$ at 42 57
\endlabellist
\begin{center}\includegraphics[scale=.8]{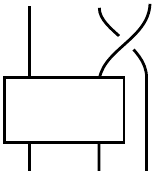}\end{center} 
\end{minipage}
\end{equation}
\begin{equation}\label{eq:av from left}
\a_{1v}  \ \ = \ \ 
\begin{minipage}{.6in}
\labellist
\small
\pinlabel $\a_v$ at 28 18
\pinlabel $k$ at 0 57
\pinlabel $1$ at 18 57
\pinlabel $l$ at 38 57
\endlabellist
\begin{center}\includegraphics[scale=.8]{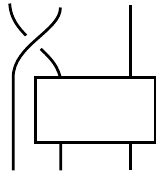}\end{center} 
\end{minipage},
\qquad
\a_{0v}  \ \ = \ \ 
\begin{minipage}{.8in}
\labellist
\small
\pinlabel $\a_v$ at 26 17
\pinlabel $1$ at 0 45
\pinlabel $k$ at 18 45
\pinlabel $l$ at 38 45
\endlabellist
\begin{center}\includegraphics[scale=.8]{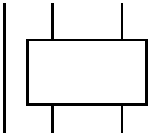}\end{center} 
\end{minipage}.
\end{equation}
where $v$ has $k$ zeroes and $l$ ones.  There are similar identities involving $\b_v$:
\begin{equation}\label{eq:bv from right}
\b_{v1}  \ \ = \ \ 
\begin{minipage}{.8in}
\labellist
\small
\pinlabel $\b_v$ at 18 18
\pinlabel $k$ at 8 -5
\pinlabel $l$ at 28 -5
\pinlabel $1$ at 42 -5
\endlabellist
\begin{center}\includegraphics[scale=.8]{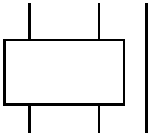}\end{center} 
\end{minipage},
\qquad
\b_{v0}  \ \ = \ \ 
\begin{minipage}{.8in}
\labellist
\small
\pinlabel $\b_v$ at 20 31
\pinlabel $k$ at 8 -5
\pinlabel $1$ at 28 -5
\pinlabel $l$ at 42 -5
\endlabellist
\begin{center}\includegraphics[scale=.8]{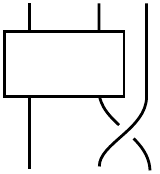}\end{center} 
\end{minipage},
\end{equation}
\begin{equation}\label{eq:bv from left}
\b_{1v}  \ \ = \ \ 
\begin{minipage}{.6in}
\labellist
\small
\pinlabel $\b_v$ at 28 31
\pinlabel $k$ at 0 -6
\pinlabel $1$ at 18 -6
\pinlabel $l$ at 38 -6
\endlabellist
\begin{center}\includegraphics[scale=.8]{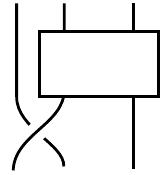}\end{center} 
\end{minipage},
\qquad
\b_{0v}  \ \ = \ \ 
\begin{minipage}{.8in}
\labellist
\small
\pinlabel $\b_v$ at 26 17
\pinlabel $1$ at 0 -6
\pinlabel $k$ at 18 -6
\pinlabel $l$ at 38 -6
\endlabellist
\begin{center}\includegraphics[scale=.8]{fig/alpha_0v}\end{center} 
\end{minipage}.
\end{equation}

\subsubsection{The complexes}
\label{sss:the complexes}

Let $v\in\{0,1\}^{m+l}$ and $w\in \{0,1\}^{n+l}$ be sequences with $|v|=l=|w|$.  Define the complexes
\[
\CB(v,w) \  \ := \ \ \left(\one_n\sqcup F(\a_v)\right)\otimes \left(F(X_{m,n}) \sqcup \KB_l \right) \otimes \left(\one_m\sqcup F(\b_w)\right),
\]
where $\a_v$ and $\b_v$ are the braids introduced in \S \ref{sss:shuffles} above.

The complex $\CB(v,w)\in \KC^b(\BS_{m+n+l})$ will be depicted diagrammatically by
\begin{equation}\label{eq:Dvw}
\CB(v,w) \ := \ 
\begin{minipage}{.8in}
\labellist
\pinlabel $m$ at 0 -5
\pinlabel $n$ at 30 -5
\pinlabel $l$ at 54 -5
\pinlabel $\a_v$ at  45  76
\pinlabel $\KB_l$ at  57  47
\pinlabel $\b_w$ at 45 23
\endlabellist
\begin{center}\includegraphics[scale=.8]{fig/Dvw}\end{center} 
\end{minipage}
\end{equation}

\subsubsection{Statement of the main theorem}
\label{sss:statement}

\begin{definition}\label{def:the polys}
Let $v\in \{0,1\}^{m+l}$ and $w\in  \{0,1\}^{n+l}$ be binary sequences with $|v|=|w|=l$.  Here $|v|=v_1+\cdots+v_{m+l}$ is the number of ones.   Let $\poly(v,w)\in \N[q,t^{\pm 1},a,(1-q)\inv]$ denote the unique family of polynomials, indexed by such pairs of binary sequences, satisfying
\begin{enumerate}\setlength{\itemsep}{3pt}
\item $\poly(\emptyset,0^n) = \left(\frac{1+a}{1-q}\right)^n$ and $\poly(0^m,\emptyset) = \left(\frac{1+a}{1-q}\right)^m$.
\item $\poly(v1,w1)=(t^l+a)\poly(v,w)$, where $|v|=|w|=l$.
\item $\poly(v0,w1)=\poly(v,1w)$.
\item $\poly(v1,w0)=\poly(1v,w)$.
\item $\poly(v0,w0)=t^{-l}\poly(1v,1w)+q t^{-l}\poly(0v,0w)$, where $|v|=|w|=l$.
\end{enumerate}
\end{definition}

\begin{lemma}\label{lemma:polys are well-defd}
The polynomials $\poly(v,w)$ are well-defined.
\end{lemma}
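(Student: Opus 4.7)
My plan is to show that the five rules in Definition \ref{def:the polys} uniquely determine $\poly(v,w)$ for every admissible pair, by verifying three things: that the cases are exhaustive and mutually exclusive, that the two base-case formulas are consistent where they overlap, and that the recursion terminates --- modulo solving one self-referential linear equation.

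First I would observe that every pair $(v,w)$ with $|v|=|w|$ falls into exactly one of the following situations: $v=\emptyset$ (forcing $w=0^n$), $w=\emptyset$ (forcing $v=0^m$), or both non-empty. In the last case the last characters of $v$ and $w$ select exactly one of rules (2)--(5). The only overlap between base cases is at $v=w=\emptyset$, where both formulas give $1$.

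For termination I would introduce the lex measure
\[
M(v,w) := \bigl(\operatorname{length}(v)+\operatorname{length}(w),\ n_0(v)+n_0(w)\bigr),
\]
where $n_0(u)$ denotes the number of trailing zeros in $u$, with $n_0(\emptyset):=0$. Rules (2), (3), (4) strictly decrease the first component. Rule (5) applied to $(v\cdot 0, w\cdot 0)$ preserves length but, by a case-analysis on whether each of $v$ and $w$ is all-zero or contains a one, strictly decreases the second component of $M$ in both right-hand branches --- with exactly one exception: when $v=0^{m-1}$ and $w=0^{n-1}$, so the input is $\poly(0^m, 0^n)$ with $m, n\geq 1$. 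In that case the second branch $(0v, 0w)=(0^m, 0^n)$ coincides with the input, and rule (5) produces (since $l=0$) the self-referential equation
\[
\poly(0^m, 0^n) \;=\; \poly(10^{m-1}, 10^{n-1}) \;+\; q\,\poly(0^m, 0^n),
\]
with unique solution
\[
\poly(0^m, 0^n) \;=\; \frac{1}{1-q}\,\poly(10^{m-1}, 10^{n-1}) \;\in\; \N[q, t^{\pm 1}, a, (1-q)^{-1}].
\]
The reference pair $(10^{m-1}, 10^{n-1})$ has the same length as $(0^m, 0^n)$ but strictly smaller trailing-zero count, so its value is already furnished by the lex induction.

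The main obstacle --- and the step I would treat most carefully --- is the case analysis verifying that the all-zero sub-case is the \emph{only} obstruction to strict decrease of $M$ under rule (5). Once this is pinned down, the $(1-q)^{-1}$ denominators already visible in base case (1) are exactly the denominators introduced by the self-referential equations, confirming the claim that $\poly(v,w) \in \N[q, t^{\pm 1}, a, (1-q)^{-1}]$ with non-negative integer coefficients.
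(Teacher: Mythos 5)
Your proof is correct and takes essentially the same route as the paper's: exactly one rule applies to each admissible pair, the self-referential all-zero instance of rule (5) is resolved by the forced identity $\poly(0^m,0^n)=\frac{1}{1-q}\poly(10^{m-1},10^{n-1})$, and the rest is a well-founded induction. The only (immaterial) difference is the termination measure: you use the lexicographic pair (total length, total trailing zeros), while the paper orders sequences by length, then number of ones, then number of inversions.
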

\begin{proof}
We prove uniqueness first, assuming existence.  
Note that relation (5) forces
\begin{equation}\label{eq:allzeroes}
\poly(0^m,0^n) = \frac{1}{1-q} \poly(10^{m-1},10^{n-1})
\end{equation}

Define a transitive, reflexive relation on binary sequences by declaring $v\leq v'$ if
\begin{enumerate}
\item $\ell(v)< \ell(v')$, or
\item $\ell(v)= \ell(v')$ and $|v|>|v'|$, or
\item $\ell(v)= \ell(v')$,  $|v|=|v'|$, and $\invs(v)\leq \invs(v')$.
\end{enumerate}
Here $\ell(v)$ denotes the length of $v$, so $v\in \{0,1\}^{\ell(v)}$, and $\invs(v)$ denotes the number of inversions of $v$, i.e.~the number of pairs of indices $i<j$ with $v_i=1$, $v_j=0$.  Then $\emptyset$ is the unique minimum sequence, and for every $v'$ there are only finitely many $v$ with $\emptyset\leq v\leq v'$.

Write $(v,w)\leq (v',w')$ if $v\leq v'$ and $w\leq w'$.    If $v$ and $w$ are not identically zero (this case is taken care of with \eqref{eq:allzeroes}) then each of the relations (1)-(5) writes $\poly(v',w')$ in terms of $\poly(v,w)$ with $(v,w)<(v',w')$.  This proves uniquess. 

Now, note that for a given pair $(v,w)$, exactly one of the rules (1)-(5) applies, hence these rules define $\poly(v,w)$ recursively (this recursion terminates because each $(v,w)$ has only finitely many predecessors).
%
\end{proof}

Recall that $\CS^{\Z\times \Z\times \Z}(\ring)$ denotes the category of triply graded complexes of $\ring$-modules (\S \ref{sss:triply graded cxs}), and we regard $\HH(X)$ as an object of $\CS^{\Z\times \Z\times \Z}(\ring)$ for any $X\in \Ch(\KC(\BS))$.
\begin{theorem}\label{thm:main thm}
The Hochschild cohomology $\HH(\CB(v,w))$ is homotopy equivalent to the free $\Z^3$-graded $\ring$-module $\poly(v,w) \ring$ with zero differential.   In particular the Poincar\'e series of $H_{\text{KR}}(T(m,n))$ equals $\poly(0^m,0^n)$.
\end{theorem}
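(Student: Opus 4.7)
The plan is to induct on the well-founded partial order $\leq$ on pairs of binary sequences introduced in the proof of Lemma \ref{lemma:polys are well-defd}, matching the recursion defining $\poly(v,w)$. For each of Rules (1)--(5), I will establish a corresponding categorical identity lifting the polynomial identity to a homotopy equivalence of $\HH$, transferring any one-sided twists onto inductively simplified summands via Lemma \ref{lemma:simplifications}. Throughout, a parity argument using the conventions of \S\ref{ss:shifts} (where $t^{-l}$ and $qt^{-l}$ both carry even $T$-degree) shows inductively that $\poly(v,w)$ is supported in even $T$-degree, so any connecting differential, which would have $T$-degree $1$, must vanish; this is what forces the simplified $\HH(\CB(v,w))$ to have zero differential.

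For Rule (1), the complex $\CB(\emptyset,0^n)$ collapses to the identity braid $\one_n$, and iterating \eqref{eq:tr of id} gives $\HH(\one_n)\cong\ring[x_1,\theta_1,\ldots,x_n,\theta_n]$. For Rule (2), the factorizations $\a_{v1}=\a_v\sqcup\one_1$ and $\b_{w1}=\b_w\sqcup\one_1$ from \eqref{eq:av from right} and \eqref{eq:bv from right} show that the rightmost strand of $\CB(v1,w1)$ passes trivially through the rest of the diagram before entering $\KB_{l+1}$; then \eqref{eq:Kmarkov} produces the factor $(t^l+a)$ on $\HH(\CB(v,w))$. For Rule (3), the identity \eqref{eq:av from right} applied to $\a_{v0}$, combined with $\b_{w1}=\b_w\sqcup\one_1$, exposes a single positive elementary crossing adjacent to the newly added bottom strand; after sliding $\KB_l$ past the irrelevant braiding via \eqref{eq:crossingAbsorption} and conjugating with Markov I, positive Markov II \eqref{eq:Markov II} cancels this crossing at zero shift and reduces $\CB(v0,w1)$ to $\CB(v,1w)$. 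Rule (4) is symmetric to Rule (3), obtained by reflecting the diagram top-to-bottom.

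The main obstacle will be Rule (5). Here both $\a_{v0}$ and $\b_{w0}$ introduce non-trivial braiding on the newly added strands, and $\KB_l$ cannot be absorbed directly. The plan is to reconfigure the diagram using \eqref{eq:av from left}, \eqref{eq:bv from left}, and \eqref{eq:crossingAbsorption} so that a sub-complex of the form $(\one_1\sqcup\KB_l)\otimes L_{l+1}$ becomes visible adjacent to the projector; applying \eqref{eq:Krecursion} then replaces it by the mapping cone $t^{-l}(\KB_{l+1}\to q(\one_1\sqcup\KB_l))$. After invoking Lemma \ref{lemma:simplifications}, the $\KB_{l+1}$ summand should be identifiable with $t^{-l}\CB(1v,1w)$ (via further Markov moves that absorb the extra braiding into the enlarged projector), while the $q(\one_1\sqcup\KB_l)$ summand becomes $qt^{-l}\CB(0v,0w)$, matching the desired polynomial identity.

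One final subtlety is the all-zero case $(v,w)=(0^m,0^n)$, where the Rule (5) recursion loops back to itself. Following the strategy in Lemma \ref{lemma:polys are well-defd}, this will be handled by invoking \eqref{eq:allzeroes} at the categorical level: the self-referential cone $\HH(\CB(0^m,0^n))\sim \HH(\CB(10^{m-1},10^{n-1}))\oplus q\HH(\CB(0^m,0^n))$ is to be solved as a formal geometric series in $q$, producing the factor $1/(1-q)$ and identifying the graded rank of $\HH(\CB(0^m,0^n))$ with $\poly(0^m,0^n)$. Combining this with the inductive argument above then yields the Poincar\'e series claim for $H_{\text{KR}}(T(m,n))$.
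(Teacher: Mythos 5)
Your Rules (1)--(4) and the generic Rule (5) follow exactly the paper's route: they are Lemmas \ref{lemma:Dv1w1}, \ref{lemma:Dv0w1}, \ref{lemma:Dv1w0} and \ref{lemma:Dv0w0}, combined with the observation that $\poly$ involves only even $T$-degrees so the connecting map in the cone of Lemma \ref{lemma:Dv0w0} must vanish once both terms are inductively known to be even. The genuine gap is your treatment of the all-zero case $(v,w)=(0^m,0^n)$, where Rule (5) returns the pair $(0v,0w)=(0^m,0^n)$ itself and the induction gives you nothing. You propose to write $\HH(\CB(0^m,0^n))\sim \HH(\CB(10^{m-1},10^{n-1}))\oplus q\,\HH(\CB(0^m,0^n))$ and ``solve as a formal geometric series in $q$.'' Two things break here. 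First, the direct sum is not available: Lemma \ref{lemma:Dv0w0} only gives a one-sided twisted complex (a cone), and killing its connecting differential by parity requires knowing that $\HH(\CB(0^m,0^n))$ is supported in even homological degrees --- which is precisely the unknown in this case, not something covered by the inductive hypothesis. Second, even granting the cone, a self-referential homotopy equivalence $X\simeq\bigl(A\rightarrow qX\bigr)$ does not by itself determine the homotopy type of $X$: extracting the factor $\frac{1}{1-q}$ requires either an infinite iteration with a convergence/homotopy-colimit argument (checking that in each fixed tridegree only finitely many $q^kA$ contribute and that the transferred twists stabilize), or some uniqueness statement, and neither is supplied. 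Identity \eqref{eq:allzeroes} is a statement about the polynomials $\poly$, not a categorical fact you may quote.

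The paper avoids this loop entirely: in its Case 0 it never invokes Lemma \ref{lemma:Dv0w0}, but instead relates $\CB(0^m,0^n)$ to $\CB(10^{m-1},10^{n-1})$ directly, by tensoring with $\one_m\sqcup\KB_1\sqcup\one_{n-1}$, an isotopy, and a Markov move, and then appeals to (the proof of) Proposition 4.12 of \cite{ElHog16a}: since $\HH(\CB(10^{m-1},10^{n-1}))$ is supported in even homological degrees by induction (note $(10^{m-1},10^{n-1})<(0^m,0^n)$ in the order), removing the $\KB_1$ has the effect of $-\otimes_{\ring}\ring[x]$ with $\deg(x)=q$, which is the rigorous source of the $\frac{1}{1-q}$. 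That external input (or an equivalent homotopy-colimit argument making your geometric series precise, with the parity of the smaller pair doing the work rather than the parity of the unknown) is what your proposal is missing; without it, the inductive step at $(0^m,0^n)$ does not close.
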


\subsection{The computations}
\label{ss:computations}
We prove Theorem \ref{thm:main thm} by showing that $\HH(\CB(v,w))$ satisfies categorical analogues of the recursion which defines $\poly(v,w)$.

\begin{lemma}\label{lemma:Dv1w1}
Let $v\in\{0,1\}^{m+l}$ and $w\in \{0,1\}^{n+l}$ be sequences with $|v|=|w|=l$.  Then 
\[
\HH(\CB(v1,w1))\simeq (t^l+a) \HH(\CB(v,w))
\]
\end{lemma}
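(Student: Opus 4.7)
The identity is the categorical analogue of rule (2) in Definition~\ref{def:the polys}, and the plan is to reduce it to the Markov II--type relation \eqref{eq:Kmarkov} for $\KB_{l+1}$ via the shuffle identities.

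First, I would apply the shuffle identities \eqref{eq:av from right} and \eqref{eq:bv from right}: since both $v$ and $w$ are augmented by a trailing $1$, the resulting shuffle braids factor as $\a_{v1}=\a_v\sqcup \one_1$ and $\b_{w1}=\b_w\sqcup \one_1$. Substituting into the definition of $\CB$ gives
\[
\CB(v1,w1) \simeq (\one_n\sqcup F(\a_v)\sqcup \one_1)\otimes (F(X_{m,n})\sqcup \KB_{l+1})\otimes (\one_m\sqcup F(\b_w)\sqcup \one_1).
\]
The crucial observation is that in this presentation the rightmost (i.e.\ $(m+n+l+1)$-st) strand carries the identity in the outer two tensor factors, interacting with the rest of the complex only as the rightmost strand of $\KB_{l+1}$ in the middle. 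Diagrammatically, it is a ``free'' strand threaded through $\KB_{l+1}$.

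Next, I would apply \eqref{eq:Kmarkov} to trace out that rightmost strand. The relation $\HH((X\sqcup \one_1)\otimes \KB_{l+1}) \simeq (t^l+a)\HH(X)$ is the universal behavior of a $\KB_{l+1}$-projector on $l+1$ strands with one identity strand threaded through it. Applied ``locally'' to the last $l+1$ strands of $\CB(v1,w1)$ --- with the first $m+n$ strands (on which $F(X_{m,n})$ acts, disjointly from $\KB_{l+1}$'s domain) carried along unchanged --- it contributes the factor $(t^l+a)$ and replaces the effective content of $\KB_{l+1}$ on its remaining $l$ strands by $\KB_l$. The residual complex on the first $m+n+l$ strands is then exactly
\[
(\one_n\sqcup F(\a_v))\otimes (F(X_{m,n})\sqcup \KB_l)\otimes (\one_m\sqcup F(\b_w)) = \CB(v,w),
\]
yielding the claimed equivalence.

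The main obstacle is making the ``local'' application of \eqref{eq:Kmarkov} rigorous: as stated, \eqref{eq:Kmarkov} concerns complexes on exactly $l+1$ strands, whereas here $\KB_{l+1}$ sits inside a complex on $m+n+l+1$ strands, and the shuffle braids $F(\a_v),F(\b_w)$ partially overlap with $\KB_{l+1}$'s $l+1$ strands on their last $l$ ``$1$-strands''. The correct formalization would proceed by rearranging the complex --- using commutativity of tensor factors on disjoint strands and the absorption property \eqref{eq:crossingAbsorption} of $\KB_{l+1}$ to move the $\KB_{l+1}$ factor into outer position --- so that Lemma~\ref{lemma:simplifications} can replace the ``$\KB_{l+1}$-with-free-strand'' sub-configuration by its simplification predicted by \eqref{eq:Kmarkov}. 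This compatibility argument, rather than the Markov relation itself, is the technical heart of the proof.
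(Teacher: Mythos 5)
Your proposal is correct and takes essentially the same route as the paper: the paper's proof likewise observes that appending a trailing $1$ to both sequences yields $\a_{v1}=\a_v\sqcup\one_1$ and $\b_{w1}=\b_w\sqcup\one_1$, so that in the diagram for $\CB(v1,w1)$ the last strand is free except for passing through $\KB_{l+1}$, and then applies \eqref{eq:Kmarkov} (read locally, as a partial trace turning $\KB_{l+1}$ into $(t^l+a)\KB_l$) to land on $(t^l+a)\HH(\CB(v,w))$. The only difference is presentational: the paper does this purely diagrammatically and does not dwell on the locality issue you flag, which is indeed how \eqref{eq:Kmarkov} is meant to be used throughout.
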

\begin{proof}
We have
\[
\CB(v1,w1) \ := \ 
\begin{minipage}{1.2in}
\labellist
\pinlabel $m$ at 0 -5
\pinlabel $n$ at 30 -5
\pinlabel $l$ at 52 -5
\pinlabel $1$ at 72 -5
\pinlabel $\a_v$ at  45  76
\pinlabel $\KB_{l+1}$ at  60  48
\pinlabel $\b_w$ at 45 22
\endlabellist
\begin{center}\includegraphics[scale=.8]{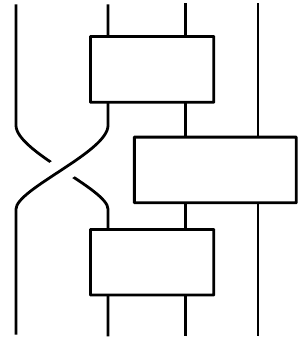}\end{center} 
\end{minipage},
\]\vskip7pt
\noindent and an application of \eqref{eq:Kmarkov} proves the Lemma.
\end{proof}

\begin{lemma}\label{lemma:Dv0w1}
Let $v\in\{0,1\}^{m+l}$ and $w\in \{0,1\}^{n+l}$ be sequences with $|v|=|w|-1=l$.  Then 
\[
\HH(\CB(v0,w1))\simeq  \HH(\CB(v,1w))
\]
\end{lemma}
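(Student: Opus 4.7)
The claim $\HH(\CB(v0,w1))\simeq\HH(\CB(v,1w))$ concerns two complexes living on different strand counts: once we read the hypothesis so that both sides are well-defined (that is, so that $|v0|=|w1|$, which forces $|v|=|w|+1$), the complex $\CB(v0,w1)$ has one more strand than $\CB(v,1w)$. Since the corresponding polynomial identity $\poly(v0,w1)=\poly(v,1w)$ carries no grading prefactor, the appropriate tool for eliminating this extra strand is positive Markov II (which also carries no grading shift). My plan is therefore to expose an identity strand on the right of $\CB(v0,w1)$ accompanied by a single positive crossing involving it, and then apply Markov II.

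First, unpack the boundary shuffle braids. By \eqref{eq:bv from right}, $\b_{w1}=\b_w\sqcup\one_1$, so the rightmost strand of the bottom factor is identity. By \eqref{eq:av from right}, $\a_{v0}=(\sigma_{m+1}\sigma_{m+2}\cdots\sigma_{m+l+1})\cdot(\a_v\sqcup\one_1)$, so after substitution the rightmost strand of $\CB(v0,w1)$ passes through identity in the bottom factor, sits inside the $\KB$-block in the middle, and is touched at the top only by the single crossing $\sigma_{n+m+l+1}$. Next, using Markov I cyclicity (Section \ref{sss:markov}), rotate the entire block of top sigmas coming from $\a_{v0}$ around to the bottom of the complex. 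After this rotation, the rotated sigmas sit just below the bottom factor $\one_{m+1}\sqcup F(\b_w)\sqcup\one_1$.

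At this point, the interior sigmas $\sigma_{n+m+2},\ldots,\sigma_{n+m+l+1}$ act purely on strands within the $\KB$-block's range. With further rearrangement to place them adjacent to the $\KB$-block, the absorption property \eqref{eq:crossingAbsorption} makes them vanish. The surviving rotated sigma is $\sigma_{n+m+l+1}$, which is now a single positive crossing between the rightmost (identity) strand and its left neighbour at the very bottom of the complex; Markov II \eqref{eq:Markov II} then eliminates this strand together with the crossing, without any grading shift. Finally, using the decomposition $X_{m+1,n}=(\sigma_{m+1}\cdots\sigma_{m+n})\cdot(X_{m,n}\sqcup\one_1)$ of the cabled crossing, along with the expression $\b_{1w}=(\one_1\sqcup\b_w)\cdot(\sigma_1\cdots\sigma_n)$ from \eqref{eq:bv from left}, verify that the leftover boundary-crossing sigma $\sigma_{n+m+1}$ combines with the sigmas from the $X_{m+1,n}$ decomposition to precisely reproduce the bottom sigmas of $\b_{1w}$ inside $\CB(v,1w)$.

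The principal obstacle is the rearrangement step in which the interior sigmas are routed into the $\KB$-block for absorption: they do not commute directly with $\a_v$ in the top factor, so pushing them into position requires a conjugation argument or careful diagrammatic commuting past $\a_v$. Once this routing is in hand, the remaining steps---the absorption, the positive Markov II, and the reassembly into $\CB(v,1w)$---are essentially mechanical.
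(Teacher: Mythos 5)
Your overall strategy --- decompose $\a_{v0}$ and $\b_{w1}$, absorb crossings into the projector, remove the extra strand by a single positive Markov II, and reassemble via $\b_{1w}$ --- is the same as the paper's, but as written the decisive step fails. The strand you destabilize is not an identity strand: as you yourself note at the outset, the rightmost strand passes through $\KB_{l+1}$ in the middle factor, and \eqref{eq:Markov II} only applies to a complex of the form $(X\sqcup\one_1)\otimes F(\sigma)$, i.e.\ the last strand must be free everywhere except for the one crossing. Nothing in your sketch ever removes the projector from that strand, and one cannot destabilize through a projector (after deleting the strand, ``$\KB_{l+1}$ on $l$ strands'' makes no sense). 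The missing move --- it is exactly the isotopy the paper performs between its first two displayed diagrams --- is to slide $\KB_{l+1}$ along its cable through the crossing chain coming from $\a_{v0}$, so that the projector ends up on the block of positions shifted one step to the left, away from the rightmost strand. After this slide the rightmost strand meets only the topmost chain crossing (the crossing of the new strand with the last projector strand), so Markov II is legitimate; moreover the remaining $l$ chain crossings now lie entirely on, and adjacent to, the projector's strands, so \eqref{eq:crossingAbsorption} absorbs them. In particular no routing of crossings past $\a_v$ is required --- the chain sits on the grouped (projector) side of $\a_v$ --- so the step you flag as the ``principal obstacle'' is not the real difficulty; the projector placement is.

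A second, related problem is your crossing bookkeeping. You declare $\sigma_{n+m+2},\ldots,\sigma_{n+m+l+1}$ absorbed and then also use $\sigma_{n+m+l+1}$ for the Markov II move, while keeping $\sigma_{n+m+1}$ to help rebuild $\b_{1w}$. In the correct accounting the Markov crossing is the topmost chain crossing, the $l$ absorbed crossings are the remaining ones $\sigma_{n+m+1},\ldots,\sigma_{n+m+l}$ (absorbable precisely because of the projector slide), and the chain of $n$ crossings appearing in $\b_{1w}$ via \eqref{eq:bv from left} is supplied entirely by peeling one strand off the cabled crossing $X_{m+1,n}$; retaining an extra $\sigma_{n+m+1}$ on top of that peeled chain gives $n+1$ crossings, one too many to reconstitute $\b_{1w}$, so the final identification with $\CB(v,1w)$ would not close up.
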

\begin{proof}
We have
\[
\CB(v0,w1) \ := \ 
\begin{minipage}{1.4in}
\labellist
\pinlabel $n$ at 0 -5
\pinlabel $1$ at 16 -5
\pinlabel $m$ at 41 -5
\pinlabel $l$ at 70 -5
\pinlabel $1$ at 86 -5
\pinlabel $m$ at 0 124
\pinlabel $n$ at 40 124
\pinlabel $l+1$ at 68 124
\pinlabel $1$ at 90 124
\pinlabel $\a_v$ at  59  98
\pinlabel $\KB_{l+1}$ at  71  52
\pinlabel $\b_w$ at 59 25
\endlabellist
\begin{center}\includegraphics[scale=.8]{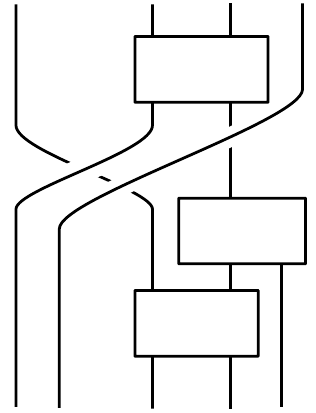}\end{center} 
\end{minipage}
\ \ \simeq \ \ 
\begin{minipage}{1.4in}
\labellist
\pinlabel $m$ at 0 -5
\pinlabel $1$ at 16 -5
\pinlabel $n$ at 51 -5
\pinlabel $l$ at 75 -5
\pinlabel $1$ at 103 -5
\pinlabel $m$ at 0 124
\pinlabel $n$ at 45 124
\pinlabel $l+1$ at 70 124
\pinlabel $1$ at 103 124
\pinlabel $\a_v$ at  63  98
\pinlabel $\KB_{l+1}$ at  76  76
\pinlabel $\b_w$ at 63 25
\endlabellist
\begin{center}\includegraphics[scale=.8]{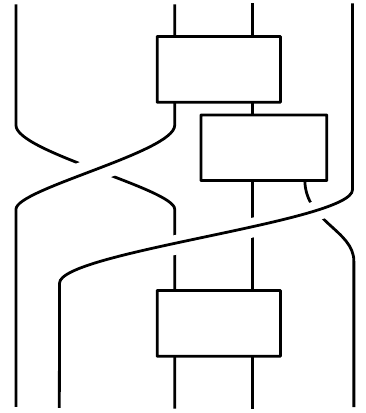}\end{center} 
\end{minipage}.
\]\vskip7pt
\noindent After a Markov move and the absorption of $l$ positive crossings (using \eqref{eq:crossingAbsorption}), this becomes\vskip7pt
\[
\begin{minipage}{1.4in}
\labellist
\pinlabel $n$ at 0 -5
\pinlabel $1$ at 16 -5
\pinlabel $m$ at 51 -5
\pinlabel $l$ at 72 -5
\pinlabel $m$ at 0 123
\pinlabel $n$ at 50 123
\pinlabel $l+1$ at 80 123
\pinlabel $\a_v$ at  60  98
\pinlabel $\KB_{l+1}$ at  76  76
\pinlabel $\b_w$ at 60 25
\endlabellist
\begin{center}\includegraphics[scale=.8]{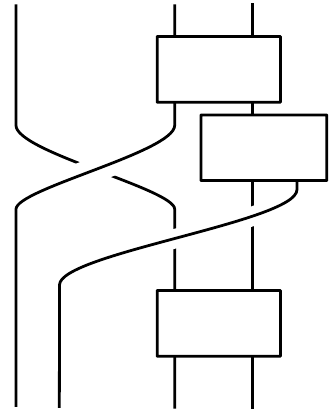}\end{center} 
\end{minipage}
\ \ \simeq \ \ 
\begin{minipage}{1.4in}
\labellist
\pinlabel $m$ at 0 -5
\pinlabel $1$ at 16 -5
\pinlabel $n$ at 55 -5
\pinlabel $l$ at 80 -5
\pinlabel $n$ at 0 124
\pinlabel $n$ at 55 124
\pinlabel $l+1$ at 80 124
\pinlabel $\a_v$ at  65  98
\pinlabel $\KB_{l+1}$ at  80  72
\pinlabel $\b_w$ at 65 25
\endlabellist
\begin{center}\includegraphics[scale=.8]{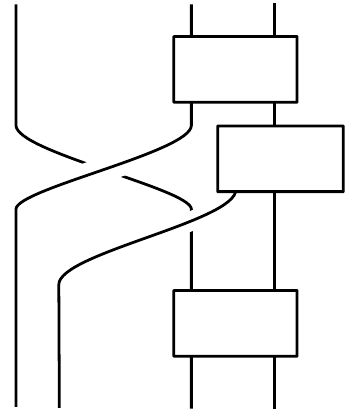}\end{center} 
\end{minipage}
 \ \ =: \ \ \CB(v,1w)
\]\vskip9pt
\noindent This proves the lemma.
\end{proof}

By symmetry we also obtain the following.

\begin{lemma}\label{lemma:Dv1w0}
Let $v\in\{0,1\}^{m+l}$ and $w\in \{0,1\}^{n+l}$ be sequences with $|v|-1=|w|=l$.  Then 
\[
\HH(\CB(v1,w0))\simeq  \HH(\CB(1v,w))
\]\qed
\end{lemma}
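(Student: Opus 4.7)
My plan for proving Lemma \ref{lemma:Dv1w0} is to mirror the argument used in Lemma \ref{lemma:Dv0w1}, exploiting the top-to-bottom symmetry of the diagram for $\CB(v,w)$. Since $\a_v$ sits on top of $\CB(v,w)$ and $\b_w$ sits at the bottom, appending a $1$ to $v$ and a $0$ to $w$ is the vertical reflection of appending a $0$ to $v$ and a $1$ to $w$, so the same sequence of diagrammatic moves should apply after flipping the diagram upside-down.

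First I would expand $\CB(v1,w0)$ using \eqref{eq:av from right} and \eqref{eq:bv from right}: the top braid $\a_{v1}$ becomes $\a_v$ with an extra identity strand on the right, while the bottom braid $\b_{w0}$ becomes $\b_w$ composed with the braid that threads the rightmost bottom strand inward across the $l$ strands of the ``ones'' region of $w$. Drawing the resulting picture exposes an extra strand which is identity at the top-right and is threaded on the bottom-right, so it is the vertical mirror of the ``extra strand'' encountered in the proof of Lemma \ref{lemma:Dv0w1}.

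Next I would apply a planar isotopy to slide this extra strand down past the $(F(X_{m,n})\sqcup\KB_l)$ block, arranging its two endpoints on the same (right-hand) side of the diagram. A positive Markov II move \eqref{eq:Markov II} then erases the strand with no trigrading shift, and the $l$ new positive crossings introduced by the isotopy, which are now pressed against $\KB_{l+1}$, are absorbed by \eqref{eq:crossingAbsorption}. By \eqref{eq:av from left}, what remains is precisely $\CB(1v,w)$: the braid $\a_{1v}$ is $\a_v$ with an identity strand prepended on the left, while $\b_w$ is unchanged on the bottom.

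The main obstacle will be the diagrammatic bookkeeping: verifying that both the positive Markov II and the absorption of positive crossings into $\KB_{l+1}$ each contribute no grading shift, so that the final equivalence $\HH(\CB(v1,w0))\simeq \HH(\CB(1v,w))$ holds on the nose. Since this is exactly the shift bookkeeping encountered in Lemma \ref{lemma:Dv0w1} (just reflected vertically), no new ingredient is required beyond carefully reading off the mirror-image of that proof.
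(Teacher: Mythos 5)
Your proposal is essentially the paper's own proof: the paper dispatches this lemma with the single remark ``by symmetry,'' meaning exactly the mirror image of the proof of Lemma \ref{lemma:Dv0w1} that you spell out (expand the appended letters, isotope the extra strand, remove it by a positive Markov II move with no shift, and absorb the resulting positive crossings into $\KB_{l+1}$ via \eqref{eq:crossingAbsorption}). One small caution in your final identification: by \eqref{eq:av from left} the braid $\a_{1v}$ is \emph{not} simply $\one_1\sqcup\a_v$ --- the added strand crosses the block of zero-labelled strands of $v$ (e.g.\ $\a_{10}=\sigma_1\neq\one_2$) --- and those crossings are precisely the ones left over from your isotopy/Markov manipulation, so the remaining diagram is indeed $\CB(1v,w)$, just not for the literal reason stated.
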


\begin{lemma}\label{lemma:Dv0w0}
Let $v\in\{0,1\}^{m+l}$ and $w\in \{0,1\}^{n+l}$ be sequences with $|v|=|w|=l$.  Then 
\[
\HH(\CB(v0,w0))\simeq  t^{-l}\Big(\HH(\CB(1v,1w))\rightarrow q \HH(\CB(0v,0w)) \Big)
\]
\end{lemma}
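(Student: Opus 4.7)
The plan is to apply the recursion \eqref{eq:Krecursion} to a distinguished local piece of $\CB(v0,w0)$. First I would decompose the shuffle braids via \eqref{eq:av from right} and \eqref{eq:bv from right},
\[
\a_{v0} = (\a_v\sqcup\one_1)\cdot F(\sigma_{m+l}\sigma_{m+l-1}\cdots\sigma_{m+1}),\qquad \b_{w0} = F(\sigma_{n+1}\sigma_{n+2}\cdots\sigma_{n+l})\cdot(\b_w\sqcup\one_1),
\]
exposing a ``top staircase'' of $l$ positive crossings immediately above the rightmost $l+1$ strands of $\CB(v0,w0)$ and a symmetric ``bottom staircase'' immediately below them. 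The rightmost strand of this $l+1$-strand region is the identity track of the $\one_1$ factors in $\a_v\sqcup\one_1$ and $\b_w\sqcup\one_1$, and $\KB_l$ acts on the remaining $l$ strands in the middle. A planar isotopy should combine the two staircases into the braid $\sigma_1\cdots\sigma_{l-1}\sigma_l^2\sigma_{l-1}\cdots\sigma_1 = L_{l+1}$, exhibiting the local subdiagram on these $l+1$ strands as $(\one_1\sqcup\KB_l)\otimes L_{l+1}$.

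Next, I would invoke \eqref{eq:Krecursion} to replace this local piece by $t^{-l}\bigl(\KB_{l+1}\to q(\one_1\sqcup\KB_l)\bigr)$, and apply Lemma \ref{lemma:simplifications} to propagate the substitution to all of $\CB(v0,w0)$, producing a homotopy equivalence with a two-term twisted complex $t^{-l}(X \to qY)$. I would then identify the two terms. In the $\one_1\sqcup\KB_l$ term, the braid $L_{l+1}$ has degenerated to the identity on $l+1$ strands, so both staircases vanish and what remains matches the decompositions $\a_{0v}=\one_1\sqcup\a_v$ and $\b_{0w}=\one_1\sqcup\b_w$ recorded in \eqref{eq:av from left} and \eqref{eq:bv from left}, giving $Y=\CB(0v,0w)$. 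In the $\KB_{l+1}$ term, the extra strand on the left of $\KB_{l+1}$ is threaded through the residual staircase crossings and absorbed into $\KB_{l+1}$ via \eqref{eq:crossingAbsorption}; the surviving crossings should reassemble, by comparison with \eqref{eq:av from left} and \eqref{eq:bv from left}, into the shuffle braids $\a_{1v}$ and $\b_{1w}$, giving $X=\CB(1v,1w)$. Applying $\HH$, which is additive and preserves twisted complexes, then yields the claimed equivalence.

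The hard part will be the first step: verifying that the two staircases, together with the identity tracks in $\a_v\sqcup\one_1$ and $\b_w\sqcup\one_1$ on the rightmost strand, really do assemble into a clean $(\one_1\sqcup\KB_l)\otimes L_{l+1}$ inside the larger diagram. Since $\a_v$ and $\b_w$ overlap the $l$ strands carrying $\KB_l$, one must check that their action on that region can be isotoped past the staircases and absorbed by $\KB_l$ via \eqref{eq:crossingAbsorption} without disturbing the $L_{l+1}$ pattern. Once this factorization is established, the rest of the argument is formal bookkeeping of shifts and of the resulting cone.
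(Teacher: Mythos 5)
You have the right key ingredient (the recursion \eqref{eq:Krecursion}) and the right overall cone shape, but the crucial first step fails as stated, and the omission of Markov moves is a genuine gap rather than a detail to be checked. The two staircases exposed by \eqref{eq:av from right} and \eqref{eq:bv from right} are carried by \emph{two different strands} of the braid: the staircase coming from $\a_{v0}$ is traversed by the new strand of the $(m+1)$-block, above the cabled crossing $X_{m+1,n+1}$, while the staircase coming from $\b_{w0}$ is traversed by the new strand of the $(n+1)$-block, below it. Tracing either strand through the middle of the diagram, it enters the cabled crossing in its own block rather than running as an identity track along the far right; so inside $\KC^b(\BS_{m+n+l+2})$ there is no planar isotopy exhibiting a local factor $(\one_1\sqcup\KB_l)\otimes L_{l+1}$ on a common set of $l+1$ strands. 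The two extra strands are joined only through the braid closure, and this is exactly why the paper's argument is not a chain of homotopy equivalences of complexes: it first performs an isotopy together with a Markov (destabilization) move --- note the $\sim$ rather than $\simeq$ at that step --- which removes one strand and produces, on $m+n+l+1$ strands, the configuration $(\one_1\sqcup\KB_l)\otimes F(\sigma_1\cdots\sigma_{l-1}\sigma_l^2\sigma_{l-1}\cdots\sigma_1)$ to which \eqref{eq:Krecursion} can be applied.

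Strand counting shows this cannot be avoided: $\CB(1v,1w)$ lives in $\KC^b(\BS_{m+n+l+1})$ while $\CB(v0,w0)$ and $\CB(0v,0w)$ live in $\KC^b(\BS_{m+n+l+2})$, so your identification of the $\KB_{l+1}$ term with $\CB(1v,1w)$ by crossing absorption and isotopy alone is impossible; a Markov move must intervene somewhere, and the equivalence in the lemma only holds after applying $\HH$. Your division of labor is also roughly inverted relative to the paper: after the initial Markov move the $\KB_{l+1}$ term \emph{is} $t^{-l}\CB(1v,1w)$ essentially on the nose, whereas it is the $(\one_1\sqcup\KB_l)$ term that requires further work, namely an isotopy and an \emph{inverse} Markov move (stabilization, restoring the strand through $X_{m+1,n+1}$) to become $\CB(0v,0w)$. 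Incorporating the trace property of $\HH$ in these two places is precisely what repairs the argument.
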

\begin{proof}
First we rewrite $\CB(v0,w0)$ by an isotopy and a Markov move:
\vskip9pt\noindent
\[
\CB(v0,w0) \ := \ 
\begin{minipage}{1.4in}
\labellist
\pinlabel $n$ at 0 145
\pinlabel $1$ at 15 145
\pinlabel $m$ at 48 145
\pinlabel $l$ at 70 145
\pinlabel $1$ at 92 145
\pinlabel $m$ at 0 -5
\pinlabel $1$ at 15 -5
\pinlabel $n$ at 48 -5
\pinlabel $l$ at 70 -5
\pinlabel $1$ at 92 -5
\pinlabel $\a_v$ at  60  113
\pinlabel $\KB_l$ at  87  70
\pinlabel $\b_w$ at 60 25
\endlabellist
\begin{center}\includegraphics[scale=.7]{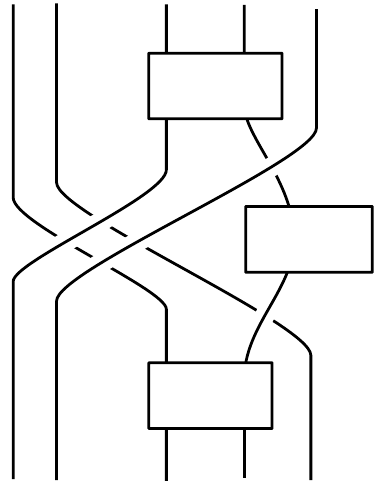}\end{center} 
\end{minipage}
\ \ \simeq \ \ 
\begin{minipage}{1.4in}
\labellist
\pinlabel $n$ at 0 145
\pinlabel $1$ at 15 145
\pinlabel $m$ at 48 145
\pinlabel $l$ at 70 145
\pinlabel $1$ at 106 145
\pinlabel $m$ at 0 -5
\pinlabel $1$ at 15 -5
\pinlabel $n$ at 48 -5
\pinlabel $l$ at 70 -5
\pinlabel $1$ at 106 -5
\pinlabel $\a_v$ at  60  113
\pinlabel $\KB_l$ at  77  77
\pinlabel $\b_w$ at 60 18
\endlabellist
\begin{center}\includegraphics[scale=.7]{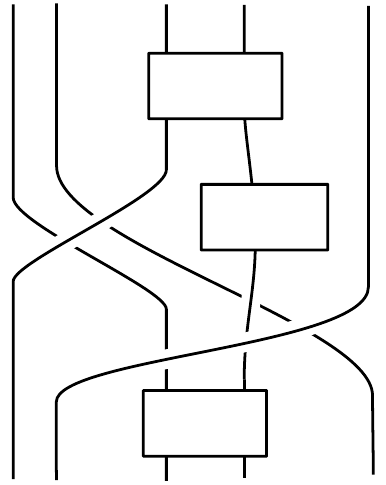}\end{center} 
\end{minipage}
\ \  \sim \ \ 
\begin{minipage}{1.2in}
\labellist
\pinlabel $n$ at 0 145
\pinlabel $1$ at 15 145
\pinlabel $m$ at 48 145
\pinlabel $l$ at 70 145
\pinlabel $m$ at 0 -5
\pinlabel $1$ at 15 -5
\pinlabel $n$ at 48 -5
\pinlabel $l$ at 70 -5
\pinlabel $\a_v$ at  60  113
\pinlabel $\KB_l$ at  77  77
\pinlabel $\b_w$ at 60 18
\endlabellist
\begin{center}\includegraphics[scale=.7]{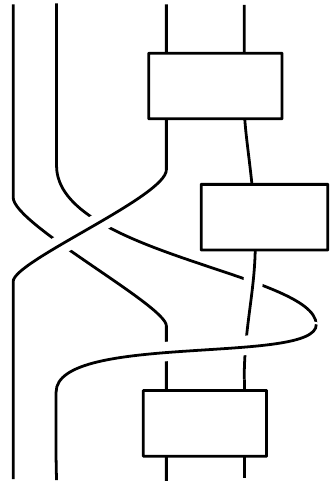}\end{center} 
\end{minipage}.
\]\vskip9pt
\noindent Next we apply \eqref{eq:Krecursion} to rewrite the tensor factor $(\one_1\sqcup \KB_l)\otimes F(\sigma_1\cdots\sigma_{l-1}\sigma_l^2\sigma_{l-1}\cdots\sigma_1)$:
\vskip9pt
\[
\begin{minipage}{1.2in}
\labellist
\pinlabel $n$ at 0 145
\pinlabel $1$ at 15 145
\pinlabel $m$ at 48 145
\pinlabel $l$ at 70 145
\pinlabel $m$ at 0 -5
\pinlabel $1$ at 15 -5
\pinlabel $n$ at 48 -5
\pinlabel $l$ at 70 -5
\pinlabel $\a_v$ at  60  113
\pinlabel $\KB_l$ at  77  77
\pinlabel $\b_w$ at 60 18
\endlabellist
\begin{center}\includegraphics[scale=.7]{fig/Dv0w0_3}\end{center} 
\end{minipage}
\ \ \simeq \ \ 
\left(
t^{-l} \begin{minipage}{1.2in}
\labellist
\pinlabel $n$ at 0 145
\pinlabel $1$ at 22 145
\pinlabel $m$ at 48 145
\pinlabel $l$ at 70 145
\pinlabel $m$ at 0 -6
\pinlabel $1$ at 22 -6
\pinlabel $n$ at 48 -6
\pinlabel $l$ at 70 -6
\pinlabel $\a_v$ at  60  113
\pinlabel $\KB_{l+1}$ at  64  65
\pinlabel $\b_w$ at 60 18
\endlabellist
\begin{center}\includegraphics[scale=.7]{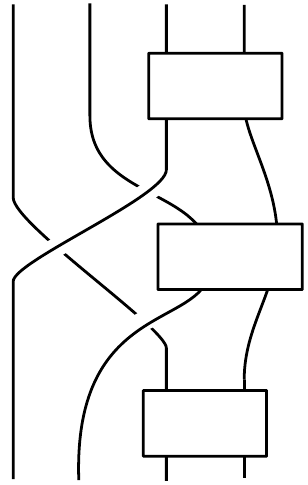}\end{center} 
\end{minipage}
\rightarrow qt^{-l} 
\begin{minipage}{1.2in}
\labellist
\pinlabel $n$ at 0 145
\pinlabel $1$ at 15 145
\pinlabel $m$ at 48 145
\pinlabel $l$ at 70 145
\pinlabel $m$ at 0 -6
\pinlabel $1$ at 15 -6
\pinlabel $n$ at 48 -6
\pinlabel $l$ at 70 -6
\pinlabel $\a_v$ at  60  113
\pinlabel $\KB_l$ at  77  77
\pinlabel $\b_w$ at 60 18
\endlabellist
\begin{center}\includegraphics[scale=.7]{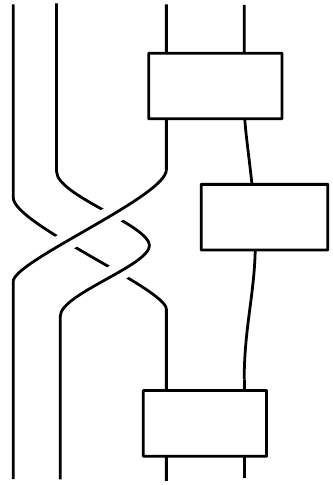}\end{center} 
\end{minipage}
\right)
\]
\vskip9pt\noindent
The first term on the right-hand side above is $t^{-l} \CB(1v,1w)$, and the second term can be manipulated by an isotopy and an inverse Markov move:\vskip7pt
\[
\begin{minipage}{1.2in}
\labellist
\pinlabel $n$ at 0 145
\pinlabel $1$ at 15 145
\pinlabel $m$ at 48 145
\pinlabel $l$ at 70 145
\pinlabel $m$ at 0 -6
\pinlabel $1$ at 15 -6
\pinlabel $n$ at 48 -6
\pinlabel $l$ at 70 -6
\pinlabel $\gamma_v$ at  60  113
\pinlabel $\KB_l$ at  77  77
\pinlabel $\b_w$ at 60 18
\endlabellist
\begin{center}\includegraphics[scale=.7]{fig/Dv0w0_5}\end{center} 
\end{minipage}
\ \ \simeq \ \ 
\begin{minipage}{1.2in}
\labellist
\pinlabel $n$ at 10 145
\pinlabel $1$ at 25 145
\pinlabel $m$ at 58 145
\pinlabel $l$ at 80 145
\pinlabel $m$ at 10 -6
\pinlabel $1$ at 25 -6
\pinlabel $n$ at 58 -6
\pinlabel $l$ at 80 -6
\pinlabel $\a_v$ at  70  113
\pinlabel $\KB_l$ at  87  70
\pinlabel $\b_w$ at 70 18
\endlabellist
\begin{center}\includegraphics[scale=.7]{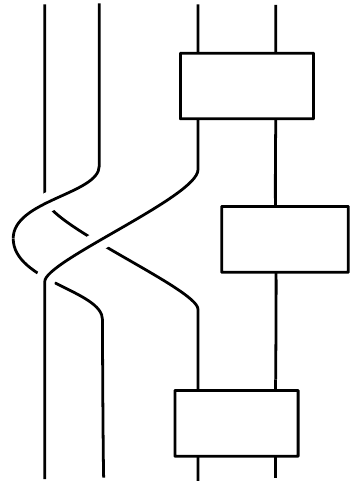}\end{center}
\end{minipage}
\ \ \sim \ \ 
\begin{minipage}{1.2in}
\labellist
\pinlabel $1$ at 0 145
\pinlabel $n$ at 23 145
\pinlabel $1$ at 49 145
\pinlabel $m$ at 75 145
\pinlabel $l$ at 101 145
\pinlabel $1$ at 0 -6
\pinlabel $m$ at 23 -6
\pinlabel $1$ at 49 -6
\pinlabel $n$ at 75 -6
\pinlabel $l$ at 101 -6
\pinlabel $\a_v$ at  89  113
\pinlabel $\KB_l$ at  100  70
\pinlabel $\b_w$ at 89 18
\endlabellist
\begin{center}\includegraphics[scale=.7]{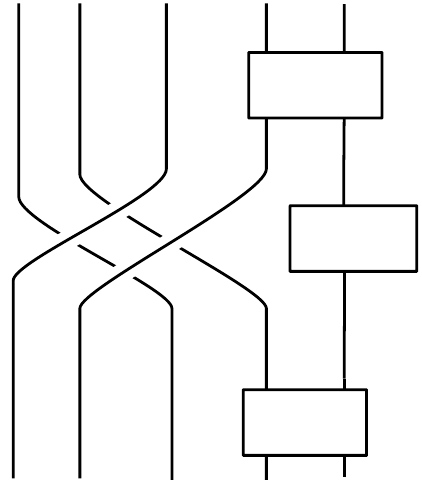}\end{center} 
\end{minipage}
\ \ = \ \ \CB(0v,0w)
\]
\vskip7pt
\noindent  This proves the lemma.
\end{proof}




\begin{proof}[Proof of Theorem \ref{thm:main thm}]
We prove the theorem by induction on $(v,w)$, using the partial order on pairs of binary sequences from the proof of Lemma \ref{lemma:polys are well-defd}.

In the base case $\CB(0^m,\emptyset) = \CB(\emptyset,0^m)=R_m$ is the identity bimodule, and $\HH(R_m)$ is calculated by repeated application of \eqref{eq:tr of id}:
\[
\HH(R_m)\cong \left(\frac{1+a}{1-q}\right)^m \ring.
\]
This proves the base case.

Suppose we wish prove the theorem for $(v',w')$ where $v'$ and $w'$ are both nonempty.  Assume by induction that the theorem holds for all $(v,w)<(v',w')$.

\textbf{Case 0. }  If $(v',w')=(0^m,0^n)$ then we have
\[
\CB(v',w') = 
\begin{minipage}{.8in}
\labellist
\small
\pinlabel $m$ at 0 10
\pinlabel $n$ at 50 10
\endlabellist
\begin{center}\includegraphics[scale=.8]{fig/crossing_2}\end{center} 
\end{minipage} 
=
\begin{minipage}{1in}
\labellist
\small
\pinlabel $1$ at 0 -6
\pinlabel $m-1$ at 24 -6
\pinlabel $1$ at 50 -6
\pinlabel $n-1$ at 73 -6
\endlabellist
\begin{center}\includegraphics[scale=.8]{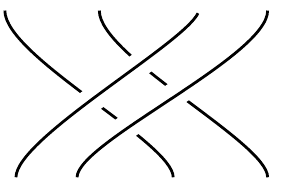}\end{center} 
\end{minipage}
\]
\vskip7pt\noindent
Now, we tensor on the right with $\one_m\sqcup \KB_1\sqcup \one_{n-1}$ and apply a Markov II move and some isotopies, obtaining:
\[
\begin{minipage}{1.1in}
\labellist
\small
\pinlabel $1$ at 0 -6
\pinlabel $m-1$ at 24 -6
\pinlabel $1$ at 50 -6
\pinlabel $n-1$ at 73 -6
\pinlabel $\KB_1$ at 50 24
\endlabellist
\begin{center}\includegraphics[scale=.8]{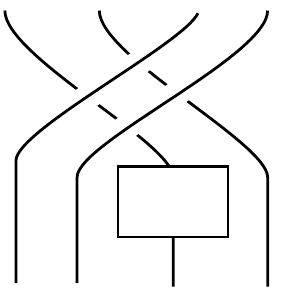}\end{center} 
\end{minipage}
\sim 
\begin{minipage}{1in}
\labellist
\small
\pinlabel $m-1$ at 0 -6
\pinlabel $1$ at 34 -6
\pinlabel $n-1$ at 60 -6
\pinlabel $\KB_1$ at 37 26
\endlabellist
\begin{center}\includegraphics[scale=.8]{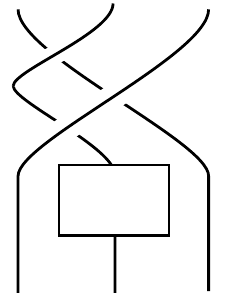}\end{center} 
\end{minipage}
\simeq
\begin{minipage}{1in}
\labellist
\small
\pinlabel $m-1$ at 24 -6
\pinlabel $1$ at 50 -6
\pinlabel $n-1$ at 73 -6
\pinlabel $\KB_1$ at 37 26
\endlabellist
\begin{center}\includegraphics[scale=.8]{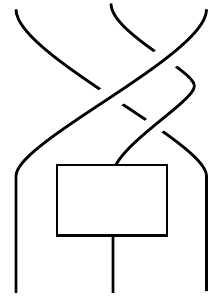}\end{center} 
\end{minipage}
\simeq
\begin{minipage}{1in}
\labellist
\small
\pinlabel $m$ at 0 -6
\pinlabel $1$ at 25 -6
\pinlabel $n$ at 50 -6
\pinlabel $\KB_1$ at 52 37
\endlabellist
\begin{center}\includegraphics[scale=.8]{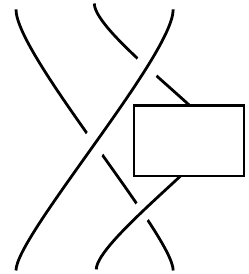}\end{center} 
\end{minipage}
=: \CB(10^{m-1},10^{n-1})
\]\vskip9pt\noindent
Thus, $\CB(0^m,0^n)$ and $\CB(10^{m-1},10^{n-1})$ are related to one another by tensoring with $\KB_1$, isotopies, and a Markov move.  The isotopies and Markov move induce homotopy equivalences after applying $\HH(\cdots)$, and so we need only consider the effect of tensoring with $\KB_1$.

Note that $(10^{m-1},10^{n-1})<(0^m,0^n)$ and so $\HH(\CB(10^{m-1},10^{n-1}))$ is supported in even homological degrees by induction.  Thus Proposition 4.12 in \cite{ElHog16a} (rather, its proof) tells us that $\HH(-)$ before and after tensoring with $\KB_1$ are related by $-\otimes_{\ring} \ring[x]$.  Precisely:
\begin{eqnarray*}
\HH(\CB(0^m,0^n)) 
&\simeq &\HH(\CB(10^{m-1},10^{n-1}))\otimes_{\ring} \ring[x] \\
&\simeq & \frac{1}{1-q}\poly(10^{m-1},10^{n-1})\ring\\
& =& \poly(0^m,0^n)\ring,
\end{eqnarray*}
where $x$ is a formal variable of degree $q$.

\textbf{Case 1. } If $(v',w')=(v1,w1)$ with $|v|=|w|=l$ then $(v,w)<(v',w')$ and
\[
\HH(\CB(v',w'))\simeq (t^l +a) \HH(\CB(v,w)) \simeq (t^l+a)\poly(v,w)\ring = \poly(v',w')\ring
\]
In the first equivalence we used Lemma \ref{lemma:Dv1w1}, in the second we used the induction hypothesis, and in the last we used the definition of $\poly(v1,w1)$.

\textbf{Case 2. } If $(v',w')=(v1,w0)$ with $|v|=|w|-1=l$ then $(1v,w)<(v',w')$ and
\[
\HH(\CB(v',w'))\simeq \HH(\CB(1v,w)) \simeq \poly(1v,w)\ring = \poly(v',w')\ring
\]
In the first equivalence we used Lemma \ref{lemma:Dv1w0}, in the second we used the induction hypothesis, and in the last we used the definition of $\poly(v1,w0)$.

\textbf{Case 3. } If $(v',w')=(v0,w1)$ with $|v|-1=|w|=l$ then the theorem holds for $(v',w')$ by symmetry (compar with Case 2). 

\textbf{Case 4. } If $(v',w')=(v0,w0)$ with $|v|=|w|=l\neq 0$ then $(1v,1w)<(v',w')$ and $(0v,0w)<(v',w')$, and
\[
\HH(\CB(v',w'))\simeq \Big(t^{-l}\HH(\CB(1v,1w))\rightarrow qt^{-l} \HH(\CB(0v,0w))\Big).
\]
by Lemma \ref{lemma:Dv0w0}.  We use the induction hypothesis to simplify each term in the right-hand side of the above, obtaining
\[
\HH(\CB(v',w'))\simeq \Big(t^{-l}\poly(1v,1w)\ring \buildrel \d\over \rightarrow qt^{-l} \poly(0v,0w)\ring\Big).
\]
The polynomials $t^{-l}\poly(1v,1w)$ and $qt^{-l} \poly(0v,0w)$ involve integer powers of $t:=T^2Q^{-2}$ and $q:=Q^2$, hence are supported in purely even homological degrees.  This forces the differential $\d$ to be zero, and we conclude
\[
\HH(\CB(v',w'))\simeq \Big(t^{-l}\poly(1v,1w)\ring \oplus qt^{-l} \poly(0v,0w)\ring\Big) = \poly(v',w')\ring.
\]
This completes the inductive step and completes the proof of Theorem \ref{thm:main thm}.
\end{proof}

\section{Colored homology of torus knots}
\label{s:colored}

\subsection{Categorified symmetrizers}
\label{ss:cat idempts}

We recall some results from \cite{HogSym-GT}.  Let $\NS\subset \BS_n$ be the full subcategory consisting of direct sums of shifts of non-trivial Bott-Samelson bimodules $B_{i_1}\otimes \cdots\otimes B_{i_m}$ with $m\geq 1$.  There is a complex $\PB_n\in \KC^-(\BS_n)$ uniquely characterized up to homotopy equivalence by:
\begin{enumerate}
\item[(P1)] $\PB_n\otimes B\simeq 0 \simeq B\otimes \PB_n$ whenever $B\in \NS$.
\item[(P2)] there is a chain map $\eta:\one_n\rightarrow \PB_n$ such that $\Cone(\eta)$ is homotopy equivalent to a complex in $\KC^-(\NS)$.
\end{enumerate}

\begin{remark}
In case $n=1$ we have $\NS=0$ (the subcategory containing only the zero bimodule) and $\eta:\one_1\rightarrow \PB_1$ is a homotopy equivalence.
\end{remark}

Let $u_k$ be formal variables of degree $(2k,2-2k)$ (written multiplicatively as $Q^{2k}T^{2-2k}$ or $q t^{1-k}$), and consider the complex $\KB_n\otimes_{\ring} \ring[u_1,\ldots,u_n]$.   In \cite{HogSym-GT} it is shown how to construct a twisted differential $d+\a$ so that the resulting the twisted complex $\tw_\a(\KB_n\otimes_{\ring} \ring[u_1,\ldots,u_n])\simeq \PB_n$.

\begin{remark}
Note that $\KB\otimes_{\ring} \ring[u_1,\ldots,u_n]$ does not live in $\KC^-(\BS_n)$, strictly speaking, since $\ring[u_1]\cong \one \oplus q\one\oplus q^2\one\oplus \cdots$ is an infinite direct sum.  Thus, in order for the constructions in this section to make sense, we close $\BS_n$ with respect to countable direct sums.
\end{remark}

\begin{lemma}
Fix integers $i,j,k\geq 0$, and let $n=i+j+k$.  Let $X\in \KC^b(\BS_n)$ be such that $\HH(X\otimes (\one_i\sqcup \KB_j\sqcup \one_k))$ is supported in even homological degrees.  Then
\begin{eqnarray*}
\HH(X\otimes (\one_i\sqcup \PB_j\sqcup \one_k))
&\simeq &\HH(X\otimes (\one_i\sqcup \KB_j\sqcup \one_k))\otimes_{\ring} \ring[u_1,\ldots,u_n]\\
&\simeq & \prod_{m=1}^n \frac{1}{1-qt^{1-m}}\HH(X\otimes (\one_i\sqcup \KB_j\sqcup \one_k)).
\end{eqnarray*}
\end{lemma}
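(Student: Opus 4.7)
The plan is to use the presentation $\PB_j \simeq \tw_\a\bigl(\KB_j\otimes_{\ring}\ring[u_1,\ldots,u_j]\bigr)$ recalled above, transfer the twist by Lemma \ref{lemma:simplifications} into a corresponding presentation of $X\otimes(\one_i\sqcup \PB_j\sqcup \one_k)$, apply $\HH$, and then show that the induced twisted differential vanishes by a parity argument.

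Since the outer tensor $\sqcup$ is tensor over $\ring$ and the inner tensor $\otimes$ is tensor over $R$, both operations commute with $-\otimes_{\ring}\ring[u_1,\ldots,u_j]$. Tensoring the displayed equivalence with $\one_i,\one_k$ externally and with $X$ internally, and transferring $\a$ via Lemma \ref{lemma:simplifications}, gives
\[
X\otimes(\one_i\sqcup \PB_j\sqcup \one_k)\ \simeq \ \tw_{\a'}\!\Bigl(\bigl(X\otimes(\one_i\sqcup \KB_j\sqcup \one_k)\bigr)\otimes_{\ring}\ring[u_1,\ldots,u_j]\Bigr).
\]
Because $\ring[u_1,\ldots,u_j]$ is a locally finite free graded $\ring$-module, the additive $\ring$-linear functor $\HH$ commutes with the outer tensor factor, yielding
\[
\HH(X\otimes(\one_i\sqcup \PB_j\sqcup \one_k))\ \simeq \ \tw_{\HH(\a')}\!\Bigl(\HH(X\otimes(\one_i\sqcup \KB_j\sqcup \one_k))\otimes_{\ring}\ring[u_1,\ldots,u_j]\Bigr).
\]

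Next I would invoke the parity hypothesis. By assumption $\HH(X\otimes(\one_i\sqcup \KB_j\sqcup \one_k))$ is concentrated in even $T$-degree, and each variable $u_k$ has $T$-degree $2-2k$, which is also even. Hence the underlying trigraded $\ring$-module of the twisted complex displayed above is concentrated entirely in even $T$-degree. The transferred twist $\HH(\a')$ has $T$-degree $+1$, so it must be identically zero; this proves the first equivalence of the lemma. Finally, since the multiplicative degree of $u_k$ is $qt^{1-k}$, the graded rank of $\ring[u_1,\ldots,u_j]$ is $\prod_{m=1}^{j}(1-qt^{1-m})^{-1}$, giving the second equivalence.

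The one delicate point I foresee is legitimizing the infinite direct sums that arise from $\ring[u_1,\ldots,u_j]$: one must verify that Lemma \ref{lemma:simplifications} applies to transfer $\a$ to $\a'$ in this bounded-above infinite setting, and that $\HH$ commutes with the resulting countable direct sums of shifts of $\ring$. Both are straightforward once $\BS_n$ is closed under countable direct sums, as indicated in the remark following the recollection of $\PB_n$; after that, the parity vanishing closes the proof cleanly.
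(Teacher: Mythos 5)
Your argument is correct and is essentially the paper's intended proof, which simply defers to Proposition 4.12 of \cite{ElHog16a}: present $\PB_j$ as $\tw_\a(\KB_j\otimes_{\ring}\ring[u_1,\ldots,u_j])$, transfer the twist through $X\otimes(\one_i\sqcup-\sqcup\one_k)$ and $\HH$ via Lemma \ref{lemma:simplifications} (in its infinite, one-sided form), and kill the transferred twist by the even-parity argument — with the small caveat that the parity hypothesis should first be used, via one more application of Lemma \ref{lemma:simplifications}, to replace $\HH(X\otimes(\one_i\sqcup\KB_j\sqcup\one_k))$ by a homotopy-equivalent even-degree module with zero differential before concluding the degree-one twist vanishes. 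Note also that your count of polynomial variables ($j$, the size of the projector, giving $\prod_{m=1}^{j}(1-qt^{1-m})^{-1}$) is the correct one; the ``$n$'' appearing in the statement is a typo, as the application in Theorem \ref{thm:colored}, where the factor runs over the color $l$, confirms.
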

\begin{proof}
Similar to the proof of Proposition 4.12 in \cite{ElHog16a}.
\end{proof}

\subsection{Colored homology}

One can define colored triply graded link homology using the complexes $\PB_n$, much as in \cite{CK12a}.  See also \cite{CautisColoured}.

Let $L=L_1\cup \cdots \cup L_r \subset \R^3$ be an oriented link and let $l_1,\ldots,l_r$ be non-negative integers (thought of as labeling the components of $L$).  The pair $(L,\underline{l})$ will be referred to as a \emph{colored link}.  We also say that the component $L_i$ is $l_i$-colored, or colored with the one-row Young diagram with $l_i$ boxes (or equivalently the $\operatorname{GL}_N$ representation $\Sym^{l_i}(\C^N)$ with $N\gg 0$).

\begin{remark}
Strictly speaking we should also choose a framing of $L$; this is important when discussing properly normalized link invariants, but will be ignored for now.
\end{remark}

Let $\b$ be a braid representative of $L$.  The strands of $\b$ inherit integer labels from $(L,\underline{l})$.  Now we form a triply graded complex of $\ring$-modules by the following procedure:
\begin{enumerate}
\item choose a collection of marked points on $\b$, away from the crossings, so that there is at least one marked point on each component of $L=\hat{\b}$.
\item replace an $l$-labeled strand by $l$ parallel copies of itself, and replace a marked point on such a strand by a box labeled $\PB_l$.
\item take $\HH$ of the complex represented by the diagram from (2). 
\end{enumerate}
The resulting complex will be denoted $\HH(\b,\underline{l},\Omega)$ where $\underline{l}$ represents the colors and $\Omega$ represents the chosen collection of marked points.

The following is proved using standard arguments (see \cite{CK12a}).
\begin{theorem}
The complex $\HH(\b,\underline{l},\Omega)$ depends only on the colored link represented by $(\b,\underline{l})$ up to homotopy and overall shift in the trigrading.
\end{theorem}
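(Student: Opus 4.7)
The plan is to verify three kinds of invariance in sequence: (a) independence of the choice of marked points $\Omega$; (b) invariance under the braid relations applied to the cabled braid; and (c) invariance under Markov I and (colored) Markov II moves. Once these are established, the theorem follows by the standard template for colored categorified link invariants (compare \cite{CK12a}).

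For (a) and (b) the essential inputs are two homotopical properties of $\PB_l$, both formal consequences of the characterization (P1)--(P2). The first is idempotency, $\PB_l \otimes \PB_l \simeq \PB_l$: apply (P2) on one tensor factor and kill the resulting cone using (P1) on the other. The second is absorption of Rouquier complexes, $F(\gamma) \otimes \PB_l \simeq \PB_l \simeq \PB_l \otimes F(\gamma)$ for every $\gamma \in \Br_l$, which again reduces to (P1) after noting that $\Cone(\one_l \to F(\gamma))$ is filtered by objects in $\NS$. Together these facts allow one to freely slide a $\PB_l$ along its strands and to merge parallel $\PB_l$-boxes on the same component, so that any two choices of $\Omega$ produce homotopy equivalent complexes; they also let braid relations on the cabled braid descend to homotopy equivalences on $\HH(\b,\underline{l},\Omega)$.

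The heart of the proof, and the expected main obstacle, is (c). Markov I is immediate from the conjugation invariance (trace property) of Hochschild cohomology. For Markov II, stabilizing $\b \mapsto \b\sigma_n^{\pm 1}$ forces the new strand to carry the same color $l=l_n$ as the old last strand, since the two are merged by the closure. After cabling, the stabilization becomes the insertion of the cabled crossing $X_{l,l}^{\pm 1}$ together with $l$ additional parallel strands carrying a fresh $\PB_l$-box; by (a) this fresh $\PB_l$ can be absorbed into the $\PB_l$ already sitting on the old last component, reducing the problem to a local computation. What remains is to show that closing off $l$ cables through a single $\PB_l$ and a cabled crossing $X_{l,l}^{\pm 1}$ multiplies $\HH$ by an explicit scalar acting as an overall shift in the trigrading. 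This can be computed by combining the Lemma of \S\ref{ss:cat idempts}, which rewrites $\HH(-\otimes \PB_l)$ in terms of $\HH(-\otimes \KB_l)$ scaled by $\prod_{m=1}^l (1-qt^{1-m})^{-1}$, with the uncolored Markov II identity \eqref{eq:Markov II} and the $\KB$-trace identity \eqref{eq:Kmarkov}. The parity hypothesis of that Lemma holds for the local configurations appearing here because they involve positive braids tensored with $\KB$-projectors, whose $\HH$ is known to be concentrated in non-negative even homological degree (as in Corollary \ref{cor:parity}).
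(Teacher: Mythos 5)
The paper itself offers no proof of this statement: it defers to the standard arguments of \cite{CK12a}, and your outline (a)--(b) plus Markov I follows exactly that template. Those parts are essentially fine: idempotency $\PB_l\otimes\PB_l\simeq\PB_l$ and braid absorption $F(\gamma)\otimes\PB_l\simeq\PB_l$ do follow from (P1)--(P2), modulo the routine convergence care needed to conclude $\PB_l\otimes\Cone(\eta)\simeq 0$ from termwise vanishing on a bounded-above complex of objects of $\NS$, and Markov I is the cyclicity of $\HH$.

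The gap is in the colored Markov II step, which is where all the content of the theorem lies. The local statement you need is: for every $X\in\KC^b(\BS_{m+l})$,
\begin{equation*}
\HH\Big((X\sqcup\one_l)\otimes\big(\one_m\sqcup F(X_{l,l}^{\pm 1})\big)\otimes\big(\one_m\sqcup\PB_l\sqcup\one_l\big)\Big)\ \simeq\ (\text{monomial in }Q,A,T)\cdot\HH\big(X\otimes(\one_m\sqcup\PB_l)\big),
\end{equation*}
i.e.\ the $l$-colored curl returns $\PB_l$ itself up to an overall trigrading shift. The identities you propose to assemble do not address this configuration: \eqref{eq:Kmarkov} closes a single strand through the \emph{larger} projector $\KB_{l+1}$, not $l$ strands through a cabled crossing and a projector of the same size, and for $\KB_l$ the analogous partial closure is genuinely \emph{not} a monomial multiple of $\KB_l$ --- already \eqref{eq:Krecursion} shows that a cabled curl on $\KB_l$ produces a two-term complex, which is the engine of the whole recursion in \S\ref{ss:computations}. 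Worse, the scalars your recipe would generate, $(t^l+a)$ from \eqref{eq:Kmarkov} and $\prod_{m=1}^{l}(1-qt^{1-m})^{-1}$ from the Lemma of \S\ref{ss:cat idempts}, are not monomials; if they actually appeared under stabilization the invariance ``up to overall shift in the trigrading'' would be \emph{refuted}, not proved. The point of colored Markov II is precisely that for the infinite projector $\PB_l$ no such factors arise, and establishing this requires the genuine ``clasp absorbs the curl'' computation as in \cite{CK12a} (or its Soergel-bimodule analogue), e.g.\ by showing that the partial closure of $F(X_{l,l})\otimes(\PB_l\sqcup\one_l)$ kills $\NS$ and receives a unit map whose cone lies in $\KC^-(\NS)$, so that the uniqueness in (P1)--(P2) identifies it with $\PB_l$ up to a monomial shift. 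Finally, the parity input you invoke for the Lemma (evenness of $\HH$ for arbitrary positive braids tensored with $\KB$-projectors) is not available at this point: evenness is established in the paper only for the specific complexes $\CB(v,w)$, as the output of the main induction, not as a general fact you may cite in advance.
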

We let $H_{\KR}(L, \underline{l})$ denote the homology of $\HH(\b,\underline{l},\Omega)$.

\begin{theorem}\label{thm:colored}
Up to a factor of $\prod_{i=1}^l (1-qt^{1-i})\inv$ the complex $\HH(\CB(1^l0^{ml-l},1^l0^{nl-l}))$ computes the triply graded homology of the torus link $T(m,n)$ in which one component is $l$-colored and the remaining components are 1-colored.  That is to say,
\[
H_{\KR}(T(m,n), (l,\underbrace{1,\ldots,1}_{\operatorname{gcd}(m,n)-1})) \cong \prod_{i=1}^l \frac{1}{1-qt^{1-i}} H(\HH(\CB(1^l0^{ml-l},1^l0^{nl-l})))
\]
\end{theorem}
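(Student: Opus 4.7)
The plan is to establish the theorem in two steps: first, identify the colored homology with that of a variant of $\CB$ in which $\KB_l$ has been replaced by the full idempotent $\PB_l$; second, use the relation between $\PB_l$ and $\KB_l$ from Section \ref{ss:cat idempts} to produce the prefactor $\prod_{i=1}^l (1-qt^{1-i})^{-1}$. Let $\widetilde{\CB}(v,w)$ denote the complex obtained from $\CB(v,w)$ by replacing the $\KB_l$ factor with $\PB_l$. The core claim is then that $\HH(\widetilde{\CB}(1^l0^{ml-l},1^l 0^{nl-l}))$ computes $H_{\KR}(T(m,n),(l,1,\ldots,1))$.

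The first step is geometric: exhibit a braid representative of $T(m,n)$ with one distinguished component cabled by $l$ parallel strands and with $\PB_l$ inserted on one group of the cabled strands, and verify that this diagram coincides with $\widetilde{\CB}(1^l0^{ml-l},1^l 0^{nl-l})$ up to isotopy and Markov moves. Starting from the braid $X_{m,n}$, a chosen component passes through $(m+n)/\gcd(m,n)$ strand positions. After cabling with $l$ strands and inserting a single $\PB_l$, the absorption property (P1) --- which allows any braid in $\Br_l$ to pass through $\PB_l$ --- can be used to consolidate all of the cabled groups into a single cabled bundle of $l$ strands attached to $\PB_l$. The remaining $(m-1)l + (n-1)l$ strands, after Markov moves and planar isotopies, rearrange into the $X_{(m-1)l,(n-1)l}$ block, while the shuffle braids $\a_v$, $\b_w$ with $v = 1^l 0^{(m-1)l}$ and $w = 1^l 0^{(n-1)l}$ (the positive braid lifts of cyclic shifts by $l$) encode precisely how the cabled component weaves through the rest of the braid. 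This is the shape of $\widetilde{\CB}(1^l0^{ml-l},1^l 0^{nl-l})$.

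For the second step, recall that $\PB_l \simeq \tw_\a(\KB_l \otimes_\ring \ring[u_1,\ldots,u_l])$ with $\deg(u_k) = qt^{1-k}$. Applying the Lemma of Section \ref{ss:cat idempts}, with the $X$ there equal to the rest of the braid surrounding the $\PB_l$ factor inside $\widetilde{\CB}$, yields
\[
\HH(\widetilde{\CB}(1^l0^{ml-l},1^l 0^{nl-l})) \simeq \prod_{i=1}^l \frac{1}{1-qt^{1-i}}\, \HH(\CB(1^l0^{ml-l},1^l 0^{nl-l})),
\]
provided the right-hand side is supported in even homological degrees. That parity condition is supplied by Theorem \ref{thm:main thm}: the homology is free of graded rank $\poly(1^l0^{ml-l},1^l 0^{nl-l}) \in \N[q,t^{\pm 1},a,(1-q)^{-1}]$, and since both $q = Q^2$ and $t = T^2Q^{-2}$ contain only even powers of $T$, the cohomological grading is entirely even.

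The main obstacle is the first, geometric step: a careful verification that the combinatorial data $v = 1^l 0^{(m-1)l}$, $w = 1^l 0^{(n-1)l}$ in $\CB$ really reproduces a cabled-$T(m,n)$ braid once $\KB_l$ is replaced by $\PB_l$. This amounts to tracking the cyclic-shift permutations underlying $\a_v$ and $\b_w$ against the orbit structure of $T(m,n)$, and using the absorption property of $\PB_l$ to simplify the resulting braid. Once this identification is pinned down, the projector-comparison and parity arguments are straightforward consequences of results already established.
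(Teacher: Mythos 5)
Your second step is fine: trading $\PB_l$ for $\KB_l$ at the cost of $\prod_{i=1}^l(1-qt^{1-i})^{-1}$ via the Lemma of \S\ref{ss:cat idempts}, with the evenness hypothesis supplied by Theorem \ref{thm:main thm} (the graded rank $\poly(1^l0^{ml-l},1^l0^{nl-l})$ involves only $q,t^{\pm1},a,(1-q)^{-1}$, hence only even $T$-powers), is exactly how the prefactor should arise. The genuine gap is in your first, geometric step, and the mechanism you propose there does not work. The absorption property (\eqref{eq:crossingAbsorption}, or (P1) for $\PB_l$) only lets the projector swallow braiding among its own $l$ strands (in particular framing changes); it cannot ``consolidate all of the cabled groups into a single cabled bundle,'' i.e.\ it cannot reduce the number of times the cabled component passes through the braid closure, and no such consolidation is present in $\CB(1^l0^{ml-l},1^l0^{nl-l})$. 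What is actually true is this: the underlying braid of $\CB(1^l0^{ml-l},1^l0^{nl-l})$ is the $l$-cable of \emph{every} strand of the $(m+n-1)$-strand braid underlying $\CB(10^{m-1},10^{n-1})$, whose closure is $T(m,n)$ (this is the content of Case 0 in the proof of Theorem \ref{thm:main thm}), with the projector block sitting on the cable of the single marked strand. In particular your strand bookkeeping is off: when $T(m,n)$ is a knot there are no ``remaining strands'' distinct from the cabled bundle --- the blocks of sizes $(m-1)l$ and $(n-1)l$ entering $X_{(m-1)l,(n-1)l}$ are themselves successive passes of the cabled component --- so the identification must be argued by cabling a specific braid presentation of $T(m,n)$ (and absorbing the framing discrepancy into $\PB_l$), not by consolidating passes.

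Stated this way, your outline can be completed when $\gcd(m,n)=1$: cabling every strand is then the same as cabling the unique component, one $\PB_l$ at one marked point is exactly the colored-homology diagram, and framing differences are absorbed; combined with your second step this proves the theorem for torus knots, which is the case borne out by the paper's examples. But for $d:=\gcd(m,n)\geq 2$ and $l\geq 2$ the identification you assert is simply false: cabling every strand also cables the $1$-colored components, so the link underlying $\CB(1^l0^{ml-l},1^l0^{nl-l})$ is the $l$-cable of all of $T(m,n)$ (a system of $dl$ parallel curves of class $(ml,nl)$ on the torus) with the projector on one $l$-band, whereas the colored link $(T(m,n),(l,1,\ldots,1))$ has $d-1+l$ curves of total class $\bigl(m+(l-1)\tfrac{m}{d},\,n+(l-1)\tfrac{n}{d}\bigr)$; already for the Hopf link with $l=2$ this is $T(4,4)$ versus $T(3,3)$. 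So in the link case no amount of isotopy, Markov moves, or absorption will make your step one go through with these sequences --- either the colored component has to be re-presented so that its $l$-cable meets the projector once while the other components stay single strands, or the statement itself has to be adjusted --- and your proposal neither notices nor addresses this. (For what it is worth, the paper supplies no proof of this theorem, so the knot case is the part you can genuinely salvage from your argument.)
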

\begin{proof}

\end{proof}

\begin{example}
The $\Sym^l$-colored homology of the unknot is
\[
\prod_{i=1}^l\frac{1}{1-qt^{1-i}}\poly(1^l,1^l) = \prod_{i=1}^l \frac{t^{i-1}+a}{1-qt^{1-i}}.
\]
\end{example}
\begin{example}
The $\Sym^2$-colored homology of the trefoil has Poincar\'e series equal to
\[
\poly(0011,000011) =\frac{t^{-5} (1+a) (t + a)}{(1-q)(1-qt\inv)} (t^5 + q t^3 + q^2 t + q t^2 + a(t^3 +q t  + t^2  + q) + a^2)
\]
(up to an overall factor of the form $q^i t^j a^k$).  The reduced Poincar\'e series is obtained by dividing by the invariant of the $\Sym^2$-colored unknot: 
\[
t^5 + q t^3 + q^2 t + q t^2 + a(t^3 +q t  + t^2  + q) + a^2.
\]
This agrees with the prediction (again, up to an overall monomial) in \cite{GGSquad}, at $\mathbf{t}_r=1$, after the substitution
\[
q\mapsto \mathbf{q}^2,\qquad\quad t\mapsto \mathbf{q}^{-2}\mathbf{t}_c^{-2},\qquad\quad a\mapsto \mathbf{a}^2 \mathbf{t}_c\inv
\]
where bold letters $\mathbf{q},\mathbf{a},\mathbf{t}_c,\mathbf{t}_r$ denote the gradings in \cite{GGSquad}.

\end{example}

\section{Comparison with earlier recursions}
\label{s:comparison}

In this section we compare the recursions which define $\poly(w,v)$ with the computations in \cite{Hog17b} for $T(N,Nr)$ and $T(N,Nr+1)$.

\subsection{Admissible fillings}
The intermediate steps in the recursions in this paper are indexed by binary sequences $(v,w)$ with $|v|=|w|$, whereas those in \cite{Hog17b} are indexed by sequences $\sigma\in \{0,1,\ldots,r\}^N$ for some $r,N\geq 1$.  Our first task is to construct for each $\sigma\in\{0,1,\ldots,r\}^N$ a pair of binary sequences $(v,w)$.  The relation between $\sigma$ and $(v,w)$ is mediated by certain simple combinatorial objects, introduced below.

Fix integers $N,r\geq 1$.  Consider a filling of an $r\times N$ grid with the symbols $1,0,\ast$.  Such a filling is \emph{admissible} if it satisfies the constraints:
\begin{enumerate}
\item each row and each column may have at most one `1'.
\item each cell below a `1' is labeled with `$\ast$'.
\item all other cells are labeled '0`.
\end{enumerate}
We say that a cell is \emph{occupied} if it is labeled with a 1.  Similarly, a column is \emph{occupied} if it contains an occupied cell.

Let $T$ be such a filling.  From $T$ we construct two binary sequences $v(T)$, $w(T)$ as follows.  Label the columns of $T$ from left to right by integers $i\in \{1,\ldots,N\}$.  For each $i$ we let $v(T)_i=1$ if the $i$-th column of $T$ is occupied, and $v(T)_i=0$ otherwise.  The sequence $w(T)$ is obtained by reading the entries of $T$, from left to right, starting with the bottom row, and skipping all cells labeled `$\ast$'.

\begin{remark}
If $r=1$ then $w(\sigma)=v(\sigma)$.
\end{remark}

\begin{remark}\label{rmk:recovering filling}
Given $m$ and $r$, the filling $T$ can be recovered entirely from $w(T)$, by reading $w(T)$ from right-to-left and filling boxes from right-to-left and top-to-bottom.
\end{remark}

From $T$ we can also construct a sequence $\sigma(T)\in\{0,\ldots,r\}^N$ by letting $\sigma(T)_i$ be the number of zeroes in the $i$-th column.  This defines a bijection between admissible $r\times N$ fillings and sequences $\{0,\ldots,r\}^N$.   Given $\sigma\in \{0,1,\ldots,r\}^m$, we let $T(\sigma)$ denote the corresponding filling and $v(\sigma):=v(T(\sigma))$, $w(\sigma):=w(T(\sigma))$ be the associated binary sequences.  Note that $v(\sigma)_i=0$ if and only if $\sigma_i=r$; the sequence $w(\sigma)$ is less easily described.


\begin{example}
Let $N=4$ and $r=5$, and $\sigma:=(3,0,1,5)$.   The corresponding filling $T(\sigma)$ is 
\[
T(\sigma) \ \ = \ \ 
\begin{tabular}{|c|c|c|c|}
\hline
0&1&0&0\\
\hline
0&$\ast$ & 1 & 0 \\
\hline
0&$\ast$& $\ast$ & 0\\
\hline
1&$\ast$&$\ast$ &0\\
\hline
$\ast$&$\ast$&$\ast$ &0\\
\hline
\end{tabular}
\]
In this case $v(\sigma) = (1,1,1,0)$ and $w(\sigma)=(0,1,0,0,0,0,1,0,0,1,0,0)$.
\end{example}

\subsubsection{Rotation of fillings}
\label{sss:rotation}

Fix integers $N,r$.  We put a total order on the cells in in $r\times N$ grid so that cells $c,c'$ satisfy $c\leq c'$ if $c'$ is in a higher row than $c$ or $c,c'$ are in the same row and $c'$ is to the right of $c$.

Let $T$ be an admissible filling.  We consider the operation of rotation $T\mapsto \phi(T)$, which simply deletes the entry of $T$ in the top right and shifts all labels from their current cells to their successors.   Note that this shifts columns to the right.  The right-most column shifts up (losing its top entry) and cycles to the far left.  There are three possibilities:
\begin{enumerate}
\item the right-most column of $T$ is $1,\ast,\ldots,\ast$.   In this case $\phi(T)$ is just $T$ with the right-most column deleted.
\item the right-most column of $T$ is occupied, but its occupied cell is not in the top row. In this case $\phi(T)$ is the result of shifting the columns of $T$ cyclically to the right, and also shifting the occupied cell in the right-most column up.  
\item the right-most column of $T$ is unoccupied.  In this case rotation creates a vacancy in the bottom left cell.  This in this case we let $\phi_0(T)$ denote the result of rotating $T$ and filling the vacant bottom left cell with 0, and we let $\phi_1(T)$ denote the result of rotation and filling the vacant bottom cell with a 1.
\end{enumerate}

The effect of rotation is illustrated below:
\[
\begin{tabular}{|c|c|c|}
\hline
\unk&\unk&1\\
\hline
\unk&\unk&$\ast$\\
\hline
\unk&\unk&$\ast$\\
\hline
\unk&\unk&$\ast$\\
\hline
\end{tabular}
 \ \ \mapsto \ \
 \begin{tabular}{|c|c|c|}
\hline
\unk&\unk\\
\hline
\unk&\unk\\
\hline
\unk&\unk\\
\hline
\unk&\unk\\
\hline
\end{tabular}\:,
\]
\[
\begin{tabular}{|c|c|c|}
\hline
\unk&\unk&0\\
\hline
\unk&\unk&$0$\\
\hline
\unk&\unk&$1$\\
\hline
\unk&\unk&$\ast$\\
\hline
\end{tabular}
 \ \ \mapsto \ \
 \begin{tabular}{|c|c|c|}
\hline
0&\unk&\unk\\
\hline
1&\unk&\unk\\
\hline
$\ast$&\unk&\unk\\
\hline
$\ast$&\unk&\unk\\
\hline
\end{tabular}\:,
\]
\[
\begin{tabular}{|c|c|c|}
\hline
\unk&\unk&0\\
\hline
\unk&\unk&$0$\\
\hline
\unk&\unk&$0$\\
\hline
\unk&\unk&0\\
\hline
\end{tabular}
 \ \ \mapsto \ \
 \begin{tabular}{|c|c|c|}
\hline
0&\unk&\unk\\
\hline
0&\unk&\unk\\
\hline
0&\unk&\unk\\
\hline
0&\unk&\unk\\
\hline
\end{tabular}
\quad \text{ or } \quad
 \begin{tabular}{|c|c|c|}
\hline
0&\unk&\unk\\
\hline
0&\unk&\unk\\
\hline
0&\unk&\unk\\
\hline
1&\unk&\unk\\
\hline
\end{tabular}\:.
\]

\begin{observation}
On the level of sequences $\sigma$ rotation has the following effect:
\begin{enumerate}\label{obs:rotation of sigma}
\item $\sigma  0\mapsto \sigma$.
\item $\sigma  k \mapsto (k-1)  \sigma$ if $1\leq k\leq r-1$.
\item $\sigma  r\mapsto  r\sigma$ or $(r-1)\sigma$.
\end{enumerate}
\end{observation}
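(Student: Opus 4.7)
The observation is a direct combinatorial check obtained by tracing the rotation $\phi$ through the bijection $T \leftrightarrow \sigma$, so my plan is to split into cases according to the value of $\sigma_N = k$, which is precisely the datum that determines which of the three rotation scenarios applies to the rightmost column of $T$.

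The first step is to record the effect of the successor shift on individual cells. In the total order on cells, a cell $(j,c)$ with $c \geq 2$ has predecessor $(j,c-1)$, while a cell $(j,1)$ with $j \geq 2$ has predecessor $(j-1,N)$, and $(1,1)$ is the minimum. Hence columns $2, \ldots, N$ of $\phi(T)$ are verbatim copies of columns $1, \ldots, N-1$ of $T$, and the entire question reduces to what happens to column $N$ of $T$: after the shift it becomes column $1$ of $\phi(T)$, with the top entry deleted, the remaining entries shifted up by one row, and a vacancy at the bottom. In particular the output sequence must take the form $\sigma_1^{\mathrm{new}} \cdot \sigma_1 \sigma_2 \cdots \sigma_{N-1}$ whenever the grid stays the same size.

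The second step is the case analysis on $k$. If $k = 0$, the unique $1$ in column $N$ sits at the top and is deleted by the shift; the resulting column of $\ast$'s has no $1$ above them, so admissibility in an $r \times N$ grid is impossible, and the only sensible interpretation, matching the definition of $\phi$, is that the column is dropped, giving the shorter sequence $\sigma$ of length $N-1$. If $1 \leq k \leq r-1$, the $1$ shifts up by one row inside its column, so exactly $k-1$ zeros sit above it in the new column $1$, yielding $\sigma_1^{\mathrm{new}} = k-1$ and hence the sequence $(k-1)\sigma$. If $k = r$, column $N$ is all zeros, the shift produces $r-1$ zeros plus a vacancy at the bottom of column $1$, and admissibility permits filling this vacancy with $0$ (giving $\sigma_1^{\mathrm{new}} = r$) or with $1$ (giving $\sigma_1^{\mathrm{new}} = r-1$), matching the two branches $\phi_0$ and $\phi_1$.

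There is no real obstacle here; the argument is a routine unwinding of definitions. The subtlest point, and the one place I would be most careful, is the $k = 0$ case, where one has to recognize that the shift cannot yield an admissible $r \times N$ filling and therefore the grid must shrink by one column. This is already built into the definition of $\phi$ in the excerpt, and it is what accounts for the length drop from $N$ to $N - 1$ in the first case of the observation.
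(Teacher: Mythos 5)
Your proof is correct and follows exactly the route the paper intends: the paper states this as an Observation without proof (supporting it only with the pictures of the three rotation cases), and your argument is the straightforward unwinding of the definitions of admissible fillings, the bijection $T\leftrightarrow\sigma$, and the successor shift, with the case split on $\sigma_N$ matching the paper's three cases (including the column deletion when $\sigma_N=0$ and the two fillings $\phi_0,\phi_1$ when $\sigma_N=r$). The only detail left implicit in your case $1\leq k\leq r-1$ is that the vacated bottom cell of the new first column is forced to be `$\ast$' by admissibility, which does not affect the zero count, so nothing is lost.
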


\begin{observation}\label{obs:rotation of v,w}
On the level of  binary sequences $(v,w)$ rotation has the following effect:
\begin{enumerate}
\item $(v1,w1)\mapsto (v,w)$.
\item $(v1,w0)\mapsto (1v,w)$.
\item $(v0,w0) \mapsto (0v,0w)$ or $(1v,1w)$
\end{enumerate}
\end{observation}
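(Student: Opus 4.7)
The plan is to verify the three cases of the observation by direct computation from the combinatorial definitions of $v$, $w$, and $\phi$. The key preliminary step is a row-by-row decomposition of $w(T)$: writing $t_i := T_{i,N}$ for the entry of the right-most column in row $i$ (indexed $1,\ldots,r$ from the top) and $T_i$ for the word obtained by reading row $i$ across columns $1,\ldots,N-1$ with $\ast$'s deleted, the reading rule for $w$ gives $w(T) = R_r R_{r-1}\cdots R_1$, where $R_i = T_i\, t_i$ if $t_i \in \{0,1\}$ and $R_i = T_i$ if $t_i = \ast$. In parallel, $v(T)_N$ records whether column $N$ is occupied, and $v(T)_j = v(\phi(T))_{j+1}$ for $j < N$ because columns $1,\ldots,N-1$ of $T$ become columns $2,\ldots,N$ of $\phi(T)$.

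Each case then reduces to reading off the trailing letters of $(v(T),w(T))$ and describing the new left-most column of $\phi(T)$. In case 1 (column $N$ equals $(1,\ast,\ldots,\ast)$), both $v(T)$ and $w(T)$ end in $1$, and rotation deletes column $N$ outright, trimming those trailing $1$'s. In case 2 (the $1$ sits in row $k+1$ with $1 \leq k \leq r-1$), $v(T)$ ends in $1$ while $w(T)$ ends in $0$ (contributed by $t_1 = 0$); the new left-most column of $\phi(T)$ is $(0^{k-1},1,\ast^{r-k})$, so $v(\phi(T)) = 1v'$, and the rows of $\phi(T)$ contribute $0\,T_i$ for $i < k$, $1\,T_k$ for $i = k$, and $T_i$ for $i > k$, whose concatenation is precisely $w(T)$ with the trailing $0$ removed. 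In case 3 (column $N$ all zero), both $v(T)$ and $w(T)$ end in $0$; the new left-most column of $\phi_\epsilon(T)$ is $(0^{r-1},\epsilon)$, and an analogous concatenation gives $(v(\phi_\epsilon(T)),w(\phi_\epsilon(T))) = (\epsilon v',\epsilon w')$.

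The main obstacle is purely organizational: three reindexings are happening at once (cyclic shift of columns to the right, upward motion of the occupied cell in case 2, and the bottom-up row-major reading for $w$), and one has to avoid conflating them. The decomposition $w(T) = R_r\cdots R_1$ set up in the first paragraph isolates the contribution of the right-most column from everything else, after which each case becomes a direct matching of concatenations.
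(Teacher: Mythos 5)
Your verification is correct: in each of the three cases the identification of the trailing letters of $(v(T),w(T))$, the description of the new left-most column of $\phi(T)$ (respectively $\phi_\epsilon(T)$), and the matching of concatenations via the decomposition $w(T)=R_r\cdots R_1$ all check out, including the boundary subcase where the occupied cell of the last column sits in the bottom row. The paper states this as an Observation with no written proof, treating it as immediate from the definitions of rotation and of $v(T),w(T)$, and your argument is exactly that intended direct verification, just carried out explicitly.
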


\subsection{Matching the recursions}

Now we are ready to match the recursions in \cite{Hog17b} with special cases of recursions appearing in this paper.
\begin{definition}
For each $\sigma\in\{0,1,\ldots,r\}^N$ define
\[
\linkpoly(\sigma):= \poly(v(\sigma),w(\sigma)),\qquad\qquad \knotpoly(\sigma) =\poly(v(\sigma),w(\sigma)0).
\]
\end{definition}

\begin{lemma}\label{lemma:comparison}
The polynomials $\linkpoly(\sigma)$ satisfy
\begin{enumerate}\setlength{\itemsep}{3pt}
\item[(L1)] $\linkpoly(\sigma 0) = (t^{l}+a)\linkpoly(\sigma)$, where $l=\#\{i\:|\: \sigma_i<r\}$.
\item[(L2)] $\linkpoly(\sigma k) =\linkpoly((k-1)\sigma)$ if $1\leq k\leq r-1$.
\item[(L3)] $\linkpoly(\sigma r) = t^{-l}\linkpoly((r-1)\sigma) + qt^{-l} \linkpoly(r\sigma)$, where $l=\#\{i\:|\: \sigma_i<r\}$.
\end{enumerate}
The polynomials $\knotpoly(\sigma)$ satisfy
\begin{enumerate}
\item[(K1a)] $\knotpoly(\sigma k 0) = (t^{l+1}+a)\knotpoly((k-1)\sigma)$, where $l=\#\{i\:|\: \sigma_i<r\}$ and $0\leq k\leq r-1$.
\item[(K1b)] $\knotpoly(\sigma r 0) = \knotpoly((r-1)\sigma)$.
\item[(K2)] $\knotpoly(\sigma k) =\knotpoly((k-1)\sigma)$ if $1\leq k\leq r-1$.
\item[(K3)] $\knotpoly(\sigma r) = t^{-l}\knotpoly((r-1)\sigma) + qt^{-l} \knotpoly(r\sigma)$, where $l=\#\{i\:|\: \sigma_i<r\}$.
\end{enumerate}
\end{lemma}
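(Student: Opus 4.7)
The plan is to prove each of the seven identities by the same two-step procedure. First, translate the $\sigma$-sequence appearing on each side to its associated pair of binary sequences via Observations \ref{obs:rotation of sigma} and \ref{obs:rotation of v,w}: reading the rotation rules ``backwards'' tells us exactly how $(v(\sigma),w(\sigma))$ and $(v(\sigma'),w(\sigma'))$ are related whenever $\sigma'$ rotates to $\sigma$. Second, recognize the resulting $\poly(v,w)$ expression as an instance of one of the rules of Definition \ref{def:the polys}, and verify that the counting parameter $l$ matches on both sides.

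For the link identities, the translations are immediate. In (L1), the rotation $\sigma 0 \mapsto \sigma$ corresponds to $(v1,w1) \mapsto (v,w)$, so $(v(\sigma 0),w(\sigma 0)) = (v(\sigma)1, w(\sigma)1)$, and rule (2) yields $\linkpoly(\sigma 0) = (t^{|v(\sigma)|}+a)\linkpoly(\sigma) = (t^l+a)\linkpoly(\sigma)$, since $|v(\sigma)|$ is by definition the number of occupied columns of $T(\sigma)$, namely $l$. In (L2), $\sigma k \mapsto (k-1)\sigma$ corresponds to $(v1,w0) \mapsto (1v,w)$, so writing $(v((k-1)\sigma), w((k-1)\sigma)) = (1v', w')$ we obtain $(v(\sigma k), w(\sigma k)) = (v'1, w'0)$, and rule (4) gives the identity directly. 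In (L3) the two branches of the type (3) rotation identify $v(\sigma r) = v'0$ and $w(\sigma r) = w'0$ with $(v((r-1)\sigma), w((r-1)\sigma)) = (1v',1w')$ and $(v(r\sigma), w(r\sigma)) = (0v',0w')$; rule (5) with $|v'| = l$ then produces the claimed two-term sum.

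For the knot identities, the extra trailing $0$ in the second argument of $\knotpoly(\sigma) = \poly(v(\sigma), w(\sigma)0)$ must be carried through. In (K2) the translation goes through unchanged: $\knotpoly(\sigma k) = \poly(v'1, w'00)$, and rule (4) applies as before. In (K3) one rewrites $\knotpoly(\sigma r) = \poly(v'0, w'00)$ and applies rule (5) directly, with $|v'| = l$. For (K1a) one expands $\knotpoly(\sigma k 0) = \poly(v(\sigma k)1, w(\sigma k)10)$, applies rule (4) to the trailing $10$ to obtain $\poly(1 v(\sigma k), w(\sigma k)1) = \poly(1 v' 1, w'01)$, and then applies rule (2) to the trailing $11$ to extract the factor $t^{|1v'|}+a = t^{l+1}+a$, leaving exactly $\poly(1v', w'0) = \knotpoly((k-1)\sigma)$. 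Case (K1b) is analogous but uses the type (3) translation for $\sigma r$: after applying rule (4), a single application of rule (3) migrates the trailing $1$ from the second argument to the front, producing $\poly(1v_*, 1w_* 0) = \knotpoly((r-1)\sigma)$ with no additional coefficient.

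The main bookkeeping challenge is to verify, for each identity, that the exponent of $t$ emerging from the application of rule (2) or rule (5) agrees with the value prescribed by the statement. The parameter entering those rules is the length of the $v$-argument immediately preceding the trailing $1$ or $0$, and one must identify this length with the stated number of occupied columns. This identification is routine once one observes that prepending a non-$r$ entry to $\sigma$ increases the occupied-column count by exactly one, so that $|v((k-1)\sigma)| = l+1$ whenever $0 \leq k-1 \leq r-1$; all of the counting works out simultaneously with the symbolic translation.
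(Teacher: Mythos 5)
Your strategy is exactly the one the paper uses: translate each rotation $\sigma'\mapsto\sigma$ into the corresponding move on pairs of binary sequences via Observations \ref{obs:rotation of sigma} and \ref{obs:rotation of v,w}, then match the result against the defining rules of Definition \ref{def:the polys}, checking that the exponent parameter agrees with $l$. The paper writes out only (L2), (K2) and (K1b) and leaves the rest to the reader; your treatments of (L1)--(L3), (K2), (K3), (K1b), and of (K1a) in the range $1\leq k\leq r-1$, are correct and follow the same template (your (K1b) chain is literally the paper's computation $\poly(v01,w010)=\poly(1v0,w01)=\poly(1v,1w0)$).

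There is, however, a gap at the $k=0$ case of (K1a), which the stated range $0\leq k\leq r-1$ includes. Your argument rests on the decomposition $(v(\sigma k),w(\sigma k))=(v'1,w'0)$ with $(v((k-1)\sigma),w((k-1)\sigma))=(1v',w')$, which is rotation case (2) and requires the occupied cell of the last column of $T(\sigma k)$ to lie strictly below the top row, i.e.\ $k\geq 1$; for $k=0$ the last column is occupied in its top row, so $w(\sigma 0)=w(\sigma)1$ ends in a $1$ and the needed factorization $w(\sigma k)=w'0$ fails. Your closing sentence even restricts to $0\leq k-1\leq r-1$, i.e.\ $1\leq k\leq r$, which is off by one relative to the claimed range and silently drops $k=0$. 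Note that at $k=0$ the right-hand side $(k-1)\sigma$ does not literally parse, so this edge case needs either the convention of \cite{Hog17b} or a separate statement; the same two moves you use (rule (4), then rule (2)) give
\[
\knotpoly(\sigma\, 0\, 0)=(t^{l+1}+a)\,\poly(1v(\sigma),\,w(\sigma)1)=(t^{l+1}+a)\,\knotpoly(\sigma\, 0),
\]
which is what replaces (K1a) when $k=0$. You should either prove this case explicitly and reconcile it with the formulation in \cite{Hog17b}, or state that your argument establishes (K1a) only for $1\leq k\leq r-1$.
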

\begin{proof}
This is a trivial consequence of the definitions together with Observations \ref{obs:rotation of sigma} and \ref{obs:rotation of v,w}.  Let us illustrate this by proving (L2), (K2), and (K1b) leaving the other cases to the reader.

Let $\sigma$ be given, and let $1\leq k\leq r-1$.   Then we consider $\sigma k$ and its rotation $(k-1) \sigma$.  The associated pair of binary sequences is of the form $(v1,w0)$ and, after rotation, $(1v,w)$.  
 Then (L2) is an immediate consequence of $\poly(v1,w0) = \poly(1v,w)$.  The proof of (K2) is equally easy:
\[
\knotpoly(\sigma k) = \poly(v1,w00) = \poly(1v,w0) = \knotpoly((k-1)\sigma).
\]

The proof of (K1b) is a little more interesting.  In this case we consider $\sigma r 0$ and $(r-1)\sigma$.  The pair of binary sequences associated to $\sigma r0$ is of the form $(v01,w01)$ for some appropriate binary seqences $v,w$.  Then rotating once sends $\sigma r 0\mapsto \sigma r$ and $(v01,w01)\mapsto (v0,w0)$.  Rotating a second time (filling the vacant cell with a 1) sends $\sigma r\mapsto (r-1)\sigma$ and $(v0,w0)\mapsto (1v,1w)$.  Thus, the operation $\sigma r 0 \mapsto (r-1)\sigma$ corresponds to $(v01,w01)\mapsto (1v,1w)$.

Now, we compute:
\[
\knotpoly(\sigma r 0) = \poly(v01,w010) =  \poly(1v0,w01) = \poly(1v,1w0)  =\knotpoly((r-1)\sigma),
\]
which proves (K1b).
\end{proof}

Now recall the polynomials $f_\sigma$ and $g_\sigma$ from \cite{Hog17b}.  Let $\operatorname{rev}(\sigma)$ denote the sequence with $\operatorname{rev}(\sigma)_i = \sigma_{m+1-i}$.  Let $\invs(\sigma)$ denote the number of \emph{inversions}, i.e.~ the number of pairs of indices $i<j$ with $\sigma_i>\sigma_j$.

\begin{theorem}\label{thm:comparison}
We have $\linkpoly(\sigma) = t^{c(\sigma)} f_{\operatorname{rev}(\sigma)}$ and $\knotpoly(\sigma)=t^{c(\sigma)} g_{\operatorname{rev}(\sigma)}$ for all $\sigma\in \{0,\ldots,r\}^N$, where
\[
c(\sigma) := \invs(\sigma) + \sum_{k=1}^r \binom{\#\{i\:|\: k\leq \sigma_i\leq r\}}{2}.
\]
\end{theorem}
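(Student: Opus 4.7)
The plan is to prove both identities by induction on the length $N$ of $\sigma$, matching the recursions of Lemma \ref{lemma:comparison} against the corresponding recursions satisfied by $f_\sigma$ and $g_\sigma$ in \cite{Hog17b}. The first step is to transcribe those recursions from \cite{Hog17b}; after applying the reversal $\sigma \mapsto \operatorname{rev}(\sigma)$ they should take the same shape as (L1)--(L3) and (K1a)--(K3), differing only by overall $t$-shifts that, upon iteration, must add up to $c(\sigma)$. The base case $N = 0$ is a direct check using $\poly(\emptyset, 0^k) = ((1+a)/(1-q))^k$ from Definition \ref{def:the polys}, together with $c(\emptyset) = 0$.

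For the inductive step I would rewrite the correction factor as $c(\sigma) = \invs(\sigma) + \sum_{k=1}^r \binom{N_k(\sigma)}{2}$, where $N_k(\sigma) := \#\{i : \sigma_i \geq k\}$, and handle each of the seven cases by computing how $\invs$ and each $N_k$ change under the corresponding elementary operation on $\sigma$. The identity then reduces, case-by-case, to a short combinatorial verification that $c(\sigma') - c(\sigma)$ equals the exponent discrepancy between Lemma \ref{lemma:comparison} and the corresponding recursion in \cite{Hog17b}. Both the link statement $\linkpoly(\sigma) = t^{c(\sigma)} f_{\operatorname{rev}(\sigma)}$ and the knot statement $\knotpoly(\sigma) = t^{c(\sigma)} g_{\operatorname{rev}(\sigma)}$ follow from the same induction, with the knot case additionally using the identity $c(\sigma 0) - c(\sigma) = N_1(\sigma)$ that arises from the defining relation $\knotpoly(\sigma) = \poly(v(\sigma), w(\sigma)0)$.

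The main obstacle will be handling the rotation-type operations (L2), (L3), (K2), (K3), in which an entry is removed from the right end of $\sigma$ and a decremented copy is prepended to the left, causing $\invs$ to change in an asymmetric way. The key observation is that under $\sigma k \mapsto (k-1)\sigma$ the multiset of entries shifts by one unit at a single value, so only $N_k$ changes and it does so by decreasing by $1$; the binomial contribution therefore changes by exactly $-N_k(\sigma)$, and combined with the unbalanced change in $\invs$, this yields a total change in $c$ matching the $\pm l$ exponent shift in the recursion. Once this single computation is verified, the remaining cases (including the appended-zero steps (L1), (K1a), (K1b) and the branched case $\sigma r \mapsto r\sigma$) follow by the same mechanism, and the theorem drops out of the induction.
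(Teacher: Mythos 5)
Your proposal is correct and is essentially the paper's own argument: Theorem \ref{thm:comparison} is proved there precisely by matching the recursions of Lemma \ref{lemma:comparison} against those satisfied by $f_\sigma$ and $g_\sigma$ in \cite{Hog17b} and tracking the extra powers of $t$ (dismissed there as ``tedious but straightforward''), which is exactly the bookkeeping of $\invs$ and the counts $N_k(\sigma)=\#\{i\,|\,\sigma_i\geq k\}$ that you spell out. The only adjustment is that the rotation-type steps (L2), (L3), (K2), (K3) preserve the length $N$, so the induction cannot run on $N$ alone; it should instead run over the well-founded order underlying the recursion (as in the proof of Lemma \ref{lemma:polys are well-defd}), or equivalently one appeals to uniqueness of solutions of the recursion and checks that $t^{c(\sigma)}f_{\operatorname{rev}(\sigma)}$ and $t^{c(\sigma)}g_{\operatorname{rev}(\sigma)}$ satisfy it.
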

\begin{proof}
Lemma \ref{lemma:comparison} shows that $\linkpoly(\sigma)$ and $\knotpoly(\sigma)$ satisfy the same recursions as $f_\sigma$ and $g_\sigma$ up to powers of $t$ and reversal of $\sigma$. Keeping track of the extra powers of $t$ is tedious but straightforward.
\end{proof}

\printbibliography

\end{document}